\theoremstyle{plain}
\newtheorem{f}[theorem]{Fact}
\theoremstyle{definition}
\theoremstyle{remark}
\begin{document}
\title{Bornological  quasi-metrizability in generalized topology}

\author{ Artur Pi\k{e}kosz\footnote{ Institute of Mathematics, Cracow University of Technology, Warszawska 24, 31-155 Cracow, Poland, 
 \ Email: {\tt pupiekos@cyfronet.pl }}\footnote{Corresponding Author.} and 
Eliza Wajch\footnote{ Institute of Mathematics and Physics, Siedlce University of Natural Sciences and Humanities, 3 Maja 54, 08-110 Siedlce, Poland,
  \ Email: {\tt  eliza.wajch@wp.pl }} }
{}

\begin{abstract}
A concept of quasi-metrizability with respect to a bornology of a generalized topological space in the sense of Delfs and Knebusch is introduced.
Quasi-metrization theorems for generalized bornological universes are deduced. A uniform quasi-metrizability with respect to a bornology is studied. The class of locally small spaces is considered and a possibly larger class of weakly locally small spaces is defined. The proofs and numerous examples are given in \textbf{ZF}. An example of a weakly locally small space which is not locally small is constructed under \textbf{ZF+CC}. Several  categories, relevant to generalized bornological universes, are defined and shown to be topological constructs. 
\end{abstract}

\begin{keyword}
Delfs-Knebusch generalized topological space,  quasi-metric,  bornology, topological category, \textbf{ZF}.
\end{keyword}

\begin{AMS}
{54A05, 18B30 (primary),  54E35, 54E55 (secondary)}
\end{AMS}

\section{Introduction}

 The present work is the first study of problems relevant to quasi-metrizability in the class of Delfs and Knebusch generalized topological spaces (gts\-es),  under the basic 
 set-theoretic assumption of \textbf{ZF} and its consistency. 
 
Contrary to very simple generalizations of topological spaces given, for example, in \cite{AF} and \cite{Cs}, the notion of a Delfs-Knebusch generalized topological space, introduced in \cite{DK}, is more powerful,  since it originates from a categorial concept of Grothendieck topology (cf. \cite{DK}, \cite{EP}, \cite{Pie1} and \cite{Pie3}). Let us admit  that many mathematicians have already studied generalized topological spaces in the sense of Cs\'asz\'ar's article \cite{Cs}. Although our article does not concern  Cs\'asz\'ar's style generalization of topologies, we thank T. Kubiak for turning our attention to the following fact: generalizations of topologies such that it is not assumed that finite intersections of open sets are open had appeared in \cite{AF}, earlier than in~\cite{Cs}. 

Unfortunately, only several articles about Delfs-Knebusch gtses have been published so far (cf. \cite{Pie1}, \cite{Pie2}, \cite{Pie3} and \cite{PW}). However, other articles on topics relevant to \cite{DK} have also appeared. For instance, Edmundo and Prelli, while applying some results  of \cite{DK} in \cite{EP}, have recently used a difficult language of 
$\mathcal{T}$-topologies, locally weakly quasi-compact spaces 
and sheaves on them. Since quasi-metrizability is the main topological problem of our work, 
we prefer to use the language of Delfs-Knebusch 
from \cite{DK}: gtses may be seen as topological spaces 
with some additional structure. The original definition 
of a generalized topological space given in \cite{DK} 
was simplified in \cite{Pie1}. It was shown in 
\cite{Kn}, \cite{Pie2} and \cite{Pie3} 
that in the context of locally definable spaces 
it is sufficient to use  function sheaves 
instead of general sheaves. Our direct approach to the 
investigation of Delfs-Knebusch gtses seems simpler and 
more natural for problems of general topology than that 
of \cite{EP}.

To avoid misunderstandings, let us make it more precise what \textbf{ZF} is in this article. Since part of our results concern proper classes in category theory, while it is disturbingly assumed in \cite{Ku} that proper classes do not exist (cf. pages 14 and 34 of \cite{Ku}), so far as \textbf{ZF} is concerned, in much the same way, as in \cite{PW}, we follow \cite{MacL} and assume the existence of a universe $\mathbb{U}$ (cf. pages 22 and 23 of \cite{MacL}). Sets in the sense of \cite{MacL} are called \emph{totalities} or \emph{collections}. A \emph{class} is a collection $u\subseteq\mathbb{U}$. Elements $u\in\mathbb{U}$ are called $\mathbb{U}$-small sets in \cite{MacL}. We denote by \textbf{ZF} the system of axioms which consists of the existence of a universe $\mathbb{U}$ and axioms 0-8 from pages 9-10 of \cite{Ku} for sets in the sense of \cite{MacL}. This system does not contain the axiom of choice. From now on, a totality $u$ will be called a \emph{set} if and only if $u$ is a $\mathbb{U}$-small set. A \emph{proper class} is a class $u$ such that $u\notin \mathbb{U}$. Our notation concerning other set-theoretic axioms independent of \textbf{ZF} that are used in this article is the same as in \cite{Her}. 
We clearly denote the results that are obtained not in $\mathbf{ZF}$ but under  $\mathbf{ZF+CC}$ where $\mathbf{CC}$ is the axiom of countable choice (cf. Definition 2.5 of \cite{Her}). 
Of course, not all axioms of \textbf{ZF} are needed to deduce some results. For instance, our results in \textbf{ZF} that do not involve proper classes, can be deduced from the standard system of axioms 0-8 given on pages 9-10 of \cite{Ku}. 

The following definition, equivalent to the notion of a gts in the sense of Delfs and Knebusch, is a reformulation of Definition 2.2.2 from \cite{Pie1}:
 
 \begin{definition}
 A \emph{generalized topological space} in the sense of Delfs and Knebusch (abbreviation: gts) is a triple $(X, \text{Op}_X, \text{Cov}_X)$ where $X$ is a set for which $\text{Op}_X\subseteq \mathcal{P}(X)$, while $\text{Cov}_X\subseteq \mathcal{P}(\text{Op}_X)$ and the following conditions are satisfied:
\begin{enumerate}
\item[(i)]  if $\mathcal{U}\subseteq \text{Op}_X$ and $\mathcal{U}$ is finite, then $\bigcup\mathcal{U}\in\text{Op}_X, \bigcap\mathcal{U}\in\text{Op}_X$ and  $\mathcal{U}\in \text{Cov}_X$ (where $\bigcap \varnothing =X$);
\item[(ii)] if $\mathcal{U}\in\text{Cov}_X$ and $ V\in\text{Op}_X$, then 
$\{ U\cap V: U\in\mathcal{U}\}\in\text{Cov}_X$;
\item[(iii)] if $\mathcal{U}\in\text{Cov}_X$ and, for each $U\in \mathcal{U}$, we have $\mathcal{V}(U)\in\text{Cov}_X$ such that $\bigcup\mathcal{V}(U)=U$, then $\bigcup_{U\in\mathcal{U}}\mathcal{V}(U)\in\text{Cov}_X$;
\item[(iv)] if $\mathcal{U}\subseteq \text{Op}_X$ and $\mathcal{V}\in \text{Cov}_X$ are such that $\bigcup\mathcal{V}=\bigcup\mathcal{U}$ and, for each $V\in\mathcal{V}$ there exists $U\in\mathcal{U}$ such that $V\subseteq U$, then $\mathcal{U}\in\text{Cov}_X$;
\item[(v)] if $\mathcal{U}\in\text{Cov}_X$, $V\subseteq\bigcup_{U\in\mathcal{U}}U$ and, for each $U\in\mathcal{U}$, we have $V\cap U\in\text{Op}_X$, then $V\in\text{Op}_X$.
\end{enumerate}
\end{definition}

\begin{remark} If $(X, \text{Op}_X, \text{Cov}_X)$ is a gts, 
then $\text{Op}_X=\bigcup\text{Cov}_X$ and, therefore, 
we can identify the gts with the ordered pair 
$(X, \text{Cov}_X)$ (cf. \cite{Pie1}, \cite{PW}). 
If this is not misleading, we shall denote a gts 
$(X, \text{Cov}_X)$ by~$X$.
\end{remark}

In our approach to the problem of how to define a quasi-metrizable gts, we apply bornologies. According to \cite{Hu}, a \emph{bornology} in a set $X$ is a non-empty ideal $\mathcal{B}$ of subsets of $X$ such that each singleton of $X$ is a member of $\mathcal{B}$. A \emph{base of a bornology} $\mathcal{B}$ is a collection $\mathcal{B}_0\subseteq\mathcal{B}$ such that each member of $\mathcal{B}$ is a subset of a member of $\mathcal{B}_0$. A bornology is \emph{second-countable} if it has a countable base. A \emph{bornological universe} is an ordered pair $((X, \tau), \mathcal{B})$ where $(X, \tau)$ is a topological space and $\mathcal{B}$ is a bornology in $X$ (cf. Definition 1.2 of \cite{Hu}).

\begin{definition} A \emph{generalized bornological universe} is an ordered pair $(X, \mathcal{B})$ where $X=(X, \text{Cov}_X)$ is a gts and $\mathcal{B}$ is a bornology in the set $X$.
\end{definition}

 A \emph{quasi-pseudometric} on a set $X$ is a function $d: X\times X\to [0; +\infty)$ such that, for all $x,y,z\in X$, $d(x,y)\leq d(x,z)+d(z,y)$ and $d(x,x)=0$. 
 A quasi-pseudometric $d$ on $X$ is called a \emph{quasi-metric} if, for all $x,y\in X$, the condition $d(x,y)=0$ implies $x=y$ (cf. \cite{Kel}, \cite{FL}). 
 Let $d$ be a quasi-pseudometric on $X$. The \emph{conjugate} of $d$ is the quasi-pseudometric $d^{-1}$ defined by $d^{-1}(x, y)=d(y, x)$ for $x, y\in X$. 
 The $d$-\emph{ball with centre $x\in X$ and radius $r\in(0; +\infty)$} is the set $B_{d}(x, r)=\{ y\in X: d(x, y)<r\}$. 
 For a set $A\subseteq X$ and a number $\delta\in (0; +\infty)$, the \emph{$\delta$-neighbourhood of $A$ with respect to $d$} is  the set $[A]^{\delta}_d=\bigcup_{a\in A}B_d(a,\delta)$. 
 The collection $\tau(d)=\{ V\subseteq X: \forall_{x\in V}\exists_{n\in\omega} B_{d}(x, \frac{1}{2^n})\subseteq V\}$ is the \emph{topology in $X$ induced by $d$}. 
 The triple $(X,\tau(d), \tau(d^{-1}))$ is the \emph{bitopological space associated with $d$}. 

\begin{definition} [cf. Definition 1.5 of \cite{PW2}] Let $d$ be a quasi-(pseudo)metric on a non-empty set $X$ and let $A$ be a subset of $X$. Then:
\begin{enumerate}
\item[(i)]  $A$ is  called \emph{$d$-bounded} if there exist $x\in X$ 
and $r\in (0; +\infty)$ such that $A\subseteq B_d(x, r)$; 
\item[(ii)] if $A$ is not $d$-bounded, we say that $A$ is \emph{$d$-unbounded};
\item[(iii)] $\mathcal{B}(d,X)$ is the collection of all $d$-bounded subsets of $X$.
\end{enumerate}
In addition, if  $X=\emptyset$, one can treat $d=\emptyset$ 
as the unique quasi-pseudometric on $X$ and, of course, 
the empty set should be also called $d$-bounded in the case of the empty space. 
\end{definition}

It was shown in Example 1.6 of \cite{PW2} that, for a quasi-metric $d$ on $X$, 
a set $A\subseteq X$ can be both $d$-bounded and $d^{-1}$-unbounded. 

\begin{definition} [cf. Definition 1.7 of \cite{PW2}] A bornological universe $((X, \tau),\mathcal{B})$ is called \emph{quasi-(pseudo)metrizable} if there exists a quasi-(pseudo)metric $d$ on $X$ such that  $\tau=\tau(d)$ and, moreover, $\mathcal{B}$ is the collection of all $d$-bounded sets.
\end{definition}

For a collection $\mathcal{A}$ of subsets of a set $X$, we denote by $\tau(\mathcal{A})$ the weakest among all topologies in $X$ that contain $\mathcal{A}$. For a gts $(X, \text{Op}_X, \text{Cov}_X)$, we call the topological space $X_{top}=(X, \tau(\text{Op}_X))$ the \emph{topologization of the gts} $X$ (cf. \cite{PW}).

\begin{definition} Suppose that $(X, \mathcal{B})$ is a generalized bornological universe. Then we say  that the gts $X$ is \emph{$\mathcal{B}$-(quasi)-(pseudo)\-metrizable} or \emph{(quasi)-(pseudo)\-metrizable with respect to $\mathcal{B}$} if the bornological universe $(X_{top}, \mathcal{B})$ is (quasi)-(pseudo)metri\-zable. 
\end{definition}

\begin{definition}[cf. Definition 1.4 of \cite{PW2}] A \emph{bornological biuniverse} is an ordered pair $((X, \tau_1, \tau_2), \mathcal{B})$  where $(X, \tau_1, \tau_2)$ is a bitopological space and $\mathcal{B}$ is a bornology in~$X$.
\end{definition}

To a great extent, the present work is a continuation of \cite{PW2}. Therefore, let us use the terminology of \cite{PW2}. 

\begin{definition} [cf. Definition 1.3 of \cite{PW2}]  Let $(X, \tau_{1}, \tau_{2})$ be a bitopological space. A bornology $\mathcal{B}$ in $X$ is called $(\tau_{1}, \tau_{2})$\emph{-proper} if, for each $A\in\mathcal{B}$, there exists $B\in\mathcal{B}$ such that $\text{cl}_{\tau_2}A\subseteq \text{int}_{\tau_1}(B)$. If $\tau=\tau_1=\tau_2$ and the bornology  $\mathcal{B}$ is $(\tau,\tau)$-proper, we say that $\mathcal{B}$ is $\tau$\emph{-proper}.
\end{definition}

Let us formulate our first (quasi)-(pseudo)metrization theorem in the class of gtses which follows from the results of \cite{PW2}. The notion of a $(\tau_1, \tau_2)$-characteristic function of a bornology is introduced in Definition 4.4 of \cite{PW2}. 

\begin{theorem}[cf.  Theorems 4.7 and 4.15, Corollaries 4.10 and 4.16 of \cite{PW2}]
 For every gts $(X, \text{Op}_X, \text{Cov}_{X})$
and a bornology $\mathcal{B}$ in $X$, the following conditions are equivalent:
\begin{enumerate}
\item[(i)] $((X, \text{Cov}_X), \mathcal{B})$ is a (quasi)-(pseudo)metrizable generalized bornological universe;
\item[(ii)] there exists a topology $\tau_2$ on $X$ such that 
$(X, \tau(\text{Op}_X), \tau_2)$ is (quasi)-(pseudo)\-metriz\-able and $\mathcal{B}$ is a $(\tau(\text{Op}_X), \tau_2)$-proper bornology with a countable base;
\item[(iii)] 
 there exists a topology $\tau_2$ on $X$ such that 
$(X, \tau(\text{Op}_X), \tau_2)$ is (quasi)-(pseudo)\-metriz\-able and $\mathcal{B}$ admits a $(\tau(\text{Op}_X), \tau_2)$-characteristic function;
\item[(iv)] there exists a (quasi)-(pseudo)metric $d$ on $X$ such that $\tau(\text{Op}_X)=\tau(d)$ and, simultaneously,  $\mathcal{B}$  has a base $\{ B_n: n\in\omega\}$ with the following property:
$$\forall_{n\in\omega}\exists_{\delta\in(0; +\infty)}[B_n]^{\delta}_d\subseteq B_{n+1}.$$
\end{enumerate}
\end{theorem}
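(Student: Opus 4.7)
The plan is to recognize that this theorem is essentially a direct transfer of the cited results from \cite{PW2} across the topologization functor. The key observation is that condition (i) is, by the definition of $\mathcal{B}$-(quasi)-(pseudo)metrizability given in the excerpt, nothing more than the statement that the classical bornological universe $((X,\tau(\text{Op}_X)),\mathcal{B})$ is (quasi)-(pseudo)metrizable. So nothing about $\text{Cov}_X$ enters into the conclusions beyond its role in determining $\text{Op}_X$ and hence the topology $\tau(\text{Op}_X)$; the whole equivalence can therefore be read as a statement about the ordinary topological space $X_{top}$ equipped with the bornology $\mathcal{B}$.

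First I would set $\tau_1 = \tau(\text{Op}_X)$ and observe that a quasi-(pseudo)metric $d$ on $X$ satisfies $\tau(d) = \tau_1$ precisely when it (quasi)-(pseudo)metrizes $X_{top}$ in the classical sense. Then the equivalence of (i) and (iv) is obtained by quoting Theorem 4.7 of \cite{PW2} (together with its quasi-metric refinement in Theorem 4.15) applied to $(X_{top},\mathcal{B})$: that result characterizes quasi-(pseudo)metrizability of a bornological universe by existence of a compatible quasi-(pseudo)metric admitting a countable base of the bornology with the swelling property $[B_n]^{\delta}_d \subseteq B_{n+1}$. The implication (iv)$\Rightarrow$(i) is the straightforward direction, since $d$ already (quasi)-(pseudo)metrizes $X_{top}$ and, via the swelling property, $\mathcal{B}(d,X) = \mathcal{B}$.

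Next, the equivalence of (ii) and (iv) is provided by the same pair of results in \cite{PW2}, since the passage from a compatible $d$ to an auxiliary topology $\tau_2$ (take $\tau_2 = \tau(d^{-1})$ in the quasi- case, or $\tau_2 = \tau(d)$ in the pseudometric case) witnesses (quasi)-(pseudo)metrizability of the bitopological space, while $(\tau_1,\tau_2)$-properness of $\mathcal{B}$ together with a countable base for $\mathcal{B}$ is exactly what is used in \cite{PW2} to build the quasi-(pseudo)metric realizing (iv). Finally, the equivalence of (ii) and (iii) follows from Corollaries 4.10 and 4.16 of \cite{PW2}, which say that having a $(\tau_1,\tau_2)$-characteristic function is equivalent to being a $(\tau_1,\tau_2)$-proper bornology with a countable base.

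The one point to be careful about is that the definition of $\mathcal{B}$-(quasi)-(pseudo)metrizability uses the topologization $X_{top} = (X,\tau(\text{Op}_X))$ rather than the original $\text{Op}_X$, so I would verify at the outset that $\tau(\text{Op}_X)$ can indeed play the role of $\tau$ in Definition 1.7 of \cite{PW2} without any compatibility issues with $\text{Cov}_X$; after that the argument is a direct citation. The main (only) obstacle is the bookkeeping of which cited result supplies which implication in each of the four variants (metric, pseudometric, quasi-metric, quasi-pseudometric), which can be dispatched by treating the quasi-pseudometric case as the generic one and noting that the stronger separation and symmetry refinements follow verbatim from the corresponding strengthened statements in \cite{PW2}.
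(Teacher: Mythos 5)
Your proposal is correct and follows exactly the route the paper takes: the paper offers no argument beyond observing that, since condition (i) is by definition a statement about the ordinary bornological universe $(X_{top},\mathcal{B})$, the whole equivalence is a direct application of Theorems 4.7 and 4.15 and Corollaries 4.10 and 4.16 of \cite{PW2} to the topologization $\tau(\text{Op}_X)$. Your additional bookkeeping about which cited result yields which implication, and the remark that $\text{Cov}_X$ enters only through $\text{Op}_X$, is consistent with (and slightly more explicit than) the paper's treatment.
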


In Section 2, we discuss natural bornologies in every gts: the small sets, the relatively compact sets and the relatively admissibly compact sets.
Other (quasi)-(pseudo)\-metrization theorems for gtses are given in Section 3.
The main theorem of Section 3 (Theorem 3.5) gives a necessary and sufficient condition for a gts with its locally small partial topologization to be (quasi)-(pseudo)metrizable with respect to the bornology of small sets. Moreover, a notion of a weakly locally small gts is introduced  and a non-trivial example of a weakly locally small but not locally small gts is constructed in every model for $\mathbf{ZF+CC}$ in Section 3. Applications of the  (quasi)-(pseudo)metrization theorems to numerous examples are shown in Section 4. Finally, in Section 5, we define several new categories relevant to this work and we prove that the newly defined categories are all topological constructs. 

So far as gtses are concerned, we use the terminology of \cite{DK}, \cite{Pie1}, \cite{Pie2} and \cite{PW}.

\begin{definition}[cf. \cite{Pie1}] If $ X=(X, \text{Cov}_X)$ and $Y=(Y, \text{Cov}_Y)$ are gtses, then:
\begin{enumerate}
\item[(i)] a set $U\subseteq X$ is called \emph{open} in the gts $X$ if $U\in\text{Op}_X$;
\item[(ii)] the collection $\text{Cov}_X$ is the \emph{generalized topology} in $X$;
\item[(iii)] an \emph{admissible open family} in the gts $X$ is a member of $\text{Cov}_X$;
\item[(iv)] a mapping $f:Y\to X$ is $(\text{Cov}_Y, \text{Cov}_X)$-\emph{strictly continuous} (in abbreviation: strictly continuous) if, for each $\mathcal{U}\in \text{Cov}_X$, we have $\{ f^{-1}(U):U\in \mathcal{U}\}\in \text{Cov}_Y$.
\end{enumerate}
\end{definition}
 
We denote by $\mathbf{GTS}$ the category of gtses as objects and strictly continuous mappings as morphisms. The category $\mathbf{GTS}$ is a subcategory of the category of Grothen\-dieck sites and their morphisms (cf. \cite{Pie1}, p. 223). Several other categories, relevant to $\mathbf{GTS}$ and bornologies, are defined in Section 5.

While we know that $\mathbf{GTS}$ is a topological construct (cf. Theorem 2.2.60 of \cite{Pie1} and Theorem 4.4 of \cite{PW}), the following problem is open:
\begin{problem} Is $\mathbf{GTS}$ isomorphic to a category of type $(\mathbf{T},\mathcal{V})$-$\textrm{Cat}$ for some quantale $\mathcal{V} $ and monad $\mathbf{T}$ (cf. Definitions III.1.6.1 and III.1.6.3 in \cite{HSW})?
\end{problem}

\section{Fundamental bornologies in gtses}

In this section, we consider natural bornologies in gtses.   

\begin{definition}[cf. Definitions 2.2.13 and 2.2.25 of \cite{Pie1}]
 If $K$ is a subset of a set $X$, then we say that a family $\mathcal{U}\subseteq\mathcal{P}(X)$ is \emph{essentially finite on} $K$ if there exists a finite $\mathcal{V}\subseteq\mathcal{U}$ such that $K\cap\bigcup\mathcal{U}\subseteq\bigcup\mathcal{V}$.
\end{definition}

\begin{definition}[cf. Definition 2.2.25 of \cite{Pie1}]
 If $X=(X,\text{Cov}_X)$ is a gts, then a set $K\subseteq X$ is called \emph{small in the gts $X$} if each family $\mathcal{U}\in\text{Cov}_X$ is essentially finite on~$K$.
\end{definition}

The collection of all small sets of a gts $X$ is a bornology in $X$ (cf. Fact 2.2.30 of \cite{Pie1}). 

\begin{definition} 
For a gts $X$, the \emph{small bornology} of $X$ is the collection 
$\mathbf{Sm}(X)$ of all small sets in~$X$.
\end{definition}

$\mathbf{Sm}(X)$ was denoted by $\text{Sm}_X$ in \cite{Pie1} but, since we use the notation of \cite{PW2}, we have replaced $\text{Sm}_X$ by $\mathbf{Sm}(X)$. The bornology of all finite subsets of $X$ (the smallest bornology of $X$) is denoted by $\mathbf{FB}(X)$.

\begin{definition}[cf. Definition 3.2 of \cite{PW}]
 If $X$ is a gts, we call a set $A\subseteq X$ \emph{admissibly compact} in $X$ if, for each $\mathcal{U}\in\text{Cov}_X$ such that $A\subseteq\bigcup\mathcal{U}$, there exists a finite $\mathcal{V}\subseteq \mathcal{U}$ such that $A\subseteq\bigcup\mathcal{V}$ .
\end{definition}

\begin{definition} For a gts $X$, the \emph{admissibly compact bornology} of $X$ is the collection $\mathbf{ACB}(X)$ of all subsets of admissibly compact sets of the gts $X$. 
\end{definition}

\begin{definition} Let $X$ be a gts. We say that a set $A$ is \emph{topologically compact} in $X$ if $A$ is compact in $X_{top}$ (cf.  Definition 3.2 of \cite{PW}).
The compact bornology $\mathbf{CB}(X_{top})$  (cf. \cite{GM} and \cite{PW2}) will be called the \emph{compact bornology of the gts} $X$ and it will be denoted by $\mathbf{CB}(X)$.
\end{definition}

\begin{f} For every gts $X$, the inclusion $(\mathbf{Sm}(X)\cup\mathbf{CB}(X))\subseteq \mathbf{ACB}(X)$ holds.
\end{f}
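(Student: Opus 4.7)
The plan is to verify the two inclusions $\mathbf{Sm}(X)\subseteq \mathbf{ACB}(X)$ and $\mathbf{CB}(X)\subseteq \mathbf{ACB}(X)$ separately, each by a direct unfolding of definitions; this requires no machinery beyond what has already been introduced.

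For the first inclusion, I would take $A\in\mathbf{Sm}(X)$ and argue that $A$ is itself admissibly compact. Given any $\mathcal{U}\in\text{Cov}_X$ with $A\subseteq\bigcup\mathcal{U}$, smallness of $A$ yields a finite $\mathcal{V}\subseteq\mathcal{U}$ such that $A\cap\bigcup\mathcal{U}\subseteq\bigcup\mathcal{V}$. Since $A\subseteq\bigcup\mathcal{U}$, this reduces to $A\subseteq\bigcup\mathcal{V}$, which is exactly the defining property of admissible compactness. Hence $A\in\mathbf{ACB}(X)$.

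For the second inclusion, I would take $A\in\mathbf{CB}(X)$, so by definition $A$ is contained in some $K\subseteq X$ which is compact in $X_{top}=(X,\tau(\text{Op}_X))$, and verify that $K$ is admissibly compact in $X$. Given $\mathcal{U}\in\text{Cov}_X$ with $K\subseteq\bigcup\mathcal{U}$, every $U\in\mathcal{U}$ lies in $\text{Op}_X$ and hence in $\tau(\text{Op}_X)$, so $\mathcal{U}$ is an open cover of $K$ in $X_{top}$. Compactness of $K$ in $X_{top}$ then produces a finite $\mathcal{V}\subseteq\mathcal{U}$ with $K\subseteq\bigcup\mathcal{V}$, showing $K$ is admissibly compact. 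Since $A\subseteq K$, this gives $A\in\mathbf{ACB}(X)$.

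There is no genuine obstacle here; the only mild subtlety is remembering that $\mathbf{CB}(X)$ and $\mathbf{ACB}(X)$ are defined as the bornologies of \emph{subsets} of (admissibly) compact sets, so one must produce a witness (admissibly) compact set containing $A$ rather than show $A$ itself is (admissibly) compact. In the small case this is automatic because $A$ is already admissibly compact, and in the compact-bornology case the witness $K$ is supplied by the hypothesis.
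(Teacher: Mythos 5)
Your proposal is correct: both inclusions follow exactly as you describe, and you rightly note the only subtlety, namely that one must exhibit an admissibly compact superset (which in the small case is the set itself, since every $\mathcal{U}\in\text{Cov}_X$ covering $A$ is essentially finite on $A$, and in the compact case is the witness $K$, every admissible family being in particular a $\tau(\text{Op}_X)$-open cover). The paper states this as a Fact without proof, and your argument is precisely the routine definitional verification it leaves to the reader.
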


The following example shows that it can happen that  $\mathbf{Sm}(X)\cup \mathbf{CB}(X)\neq \mathbf{ACB}(X)$ and neither $\mathbf{Sm}(X)\subseteq\mathbf{CB}(X)$ nor $\mathbf{CB}(X)\subseteq\mathbf{Sm}(X)$.

\begin{example} \label{comp-born}
For $X=\mathbb{R}\times\{0, 1\}$, let $\text{Op}_X$ be the natural topology in $X$ inherited from the usual topology of $\mathbb{R}$ and let $\text{Cov}_X$ be the collection of all families $\mathcal{U}\subseteq \text{Op}_X$ such that $\mathcal{U}$ is essentially finite on $\mathbb{R}\times\{0\}$. Then, for  $A=[0; 1]\times\{1\}$ and $B=\mathbb{R}\times\{0\}$, we have $A\in\mathbf{CB}(X)\setminus\mathbf{Sm}(X)$  and $B\in\mathbf{Sm}(X)\setminus\mathbf{CB}(X)$, while $A\cup B\in\mathbf{ACB}(X)\setminus (\mathbf{CB}(X)\cup\mathbf{Sm}(X))$ . 
\end{example}

For a set $X$ and a collection $\Psi\subseteq\mathcal{P}^2(X)$, we denote by $\langle \Psi\rangle_X$ the smallest among  generalized topologies in $X$ that contain $\Psi$. If $\mathcal{A}\subseteq \mathcal{P}(X)$, let $\text{EssCount}(\mathcal{A})$ be the collection of all essentially countable subfamilies of $\mathcal{A}$. We recall that $\text{EssFin}(\mathcal{A})$ is the collection of all essentially finite subfamilies of $\mathcal{A}$ (cf. \cite{Pie1}-\cite{Pie2} and \cite{PW}).

\begin{f}[cf. Examples 2.2.35 and 2.2.14(8) of \cite{Pie1}]
 Assume $(X, \tau)$ is a topological space. 
 That $\text{EssFin}(\tau)$ is a generalized topology 
 in $X$ is true in $\mathbf{ZF}$. On the other hand, 
 that $\text{EssCount}(\tau)$ is a generalized topology 
 in $X$ is true in $\mathbf{ZF+CC}$.
\end{f}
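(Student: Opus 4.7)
The plan is to verify the five axioms (i)--(v) of a gts directly for $\text{Op}_X=\tau$ and $\text{Cov}_X=\text{EssFin}(\tau)$ (respectively $\text{EssCount}(\tau)$), and then to locate precisely the step where countable choice enters. Since $\tau$ is a topology, $\text{Op}_X=\tau$ already satisfies the openness parts of the axioms; all the content is in checking that the claimed admissible families really form a generalized topology. Throughout, a family $\mathcal{U}\subseteq\tau$ lies in $\text{EssFin}(\tau)$ exactly when some finite $\mathcal{V}\subseteq\mathcal{U}$ has $\bigcup\mathcal{V}=\bigcup\mathcal{U}$, and analogously for $\text{EssCount}(\tau)$ with ``countable'' in place of ``finite''.

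Axioms (i), (ii) and (v) are immediate in $\mathbf{ZF}$ for both collections. For (i), a finite $\mathcal{U}\subseteq\tau$ witnesses itself. For (ii), if $\mathcal{V}\subseteq\mathcal{U}$ witnesses essential finiteness/countability of $\mathcal{U}$, then $\{U\cap V:U\in\mathcal{V}\}$ witnesses it for $\{U\cap V:U\in\mathcal{U}\}$; no selection is involved. For (v), if $\mathcal{W}\subseteq\mathcal{U}$ is a finite (resp.\ countable) witness with $\bigcup\mathcal{W}=\bigcup\mathcal{U}$, then $V=\bigcup_{W\in\mathcal{W}}(V\cap W)$ is a finite (resp.\ countable) union of members of $\tau$, hence belongs to $\tau$.

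The interesting axioms are (iii) and (iv), where one must assemble a single witness from witnesses attached to each member of another family. For (iii), starting with a witness $\mathcal{W}\subseteq\mathcal{U}$ and, for each $W\in\mathcal{W}$, a witness $\mathcal{W}(W)\subseteq\mathcal{V}(W)$ with $\bigcup\mathcal{W}(W)=W$, the family $\bigcup_{W\in\mathcal{W}}\mathcal{W}(W)$ is the required witness for $\bigcup_{U\in\mathcal{U}}\mathcal{V}(U)$. For (iv), one selects, for each $V$ in a witness of $\mathcal{V}$, some $U(V)\in\mathcal{U}$ containing $V$; then $\{U(V):V\in\mathcal{W}\}$ is a witness for $\mathcal{U}$ because $\bigcup\mathcal{V}=\bigcup\mathcal{U}$. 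In both axioms, when the outer witness $\mathcal{W}$ is finite, the needed selection (of $\mathcal{W}(W)$ in (iii), of $U(V)$ in (iv)) is a finite choice, which is available in $\mathbf{ZF}$. When $\mathcal{W}$ is merely countable, the same construction requires a simultaneous choice from countably many non-empty collections, which is exactly $\mathbf{CC}$.

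The only real obstacle is thus being careful about what ``witness'' means and verifying that no hidden choice sneaks into (i), (ii), (v), and that the choices in (iii), (iv) really are indexed by the finite (respectively countable) witness $\mathcal{W}$ rather than by the potentially large family $\mathcal{U}$ itself. Once this is observed, both halves of the fact follow in two short cases, and the dependence on $\mathbf{CC}$ in the countable case is shown to be unavoidable on the level of the argument.
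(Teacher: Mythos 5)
Your verification is correct and is essentially the only sensible route: the paper itself gives no proof of this Fact, deferring to Examples 2.2.35 and 2.2.14(8) of \cite{Pie1}, and those amount to exactly the axiom-by-axiom check you carry out, with the observation that the only selections occur in axioms (iii) and (iv) and are indexed by the finite (resp.\ countable) witness. Two small caveats. First, in axiom (iii) for $\text{EssCount}(\tau)$, after invoking $\mathbf{CC}$ to pick a countable witness $\mathcal{W}(W)\subseteq\mathcal{V}(W)$ for each $W$ in the countable outer witness $\mathcal{W}$, you still need $\bigcup_{W\in\mathcal{W}}\mathcal{W}(W)$ to be countable; a countable union of countable sets being countable is itself a consequence of $\mathbf{CC}$ (choose enumerations), not of $\mathbf{ZF}$ alone, so this is a second, separate appeal to $\mathbf{CC}$ that your phrase ``simultaneous choice from countably many non-empty collections'' glosses over. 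Second, your closing claim that the dependence on $\mathbf{CC}$ is ``unavoidable on the level of the argument'' only says your proof uses it; the genuine independence is established in the paper's subsequent Remark, which exhibits a model of $\mathbf{ZF}+\neg\mathbf{CC}(\text{fin})$ containing an uncountable set that is a countable union of finite sets, whence $\text{EssCount}(\mathcal{P}(X))$ fails axiom (iv) for the family of singletons. Neither point affects the correctness of what you prove.
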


\begin{remark}
It is unprovable in $\mathbf{ZF}$ that, for every topological space $(X, \tau)$,  the collection $\text{EssCount}(\tau)$ is a generalized topology in $X$. Namely, let $\mathbf{M}$ be a model for $\mathbf{ZF+\neg CC}(\text{fin})$ where $\mathbf{CC}(\text{fin})$ states that countable products on non-empty finite sets are non-empty (cf. Definition 2.9(3) of \cite{Her}). In view of Proposition 3.5 of \cite{Her}, there exists in $\mathbf{M}$ an uncountable set $X$ such that $X$ is a countable union of finite sets. Let $\tau=\mathcal{P}(X)$. If $\text{EssCount}(\tau)$ were a generalized topology in $X$, the family of all singletons of $X$ would belong to $\text{EssCount}(\tau)$ which is impossible, since $X$ is uncountable. 
\end{remark}

Let us observe that, for the gts $X$ from Example \ref{comp-born}, the admissibly compact bornology of $X$ is generated by $\mathbf{CB}(X)\cup\mathbf{Sm}(X)$.
That not every gts may share this property is shown by the following example:

\begin{example}[\textbf{ZF+CC}] For $X=\omega_1$, let $\text{Op}_X$ be the topology induced by the usual linear order in $\omega_1$ and let $\text{Cov}_X=\text{EssCount}(\text{Op}_X)$. Then $\mathbf{Sm}(X)=\mathbf{FB}(X)\neq \mathbf{CB}(X)\neq \mathbf{ACB}(X)=\mathcal{P}(X)$. 
\end{example} 

In what follows, for sets  $X, Y$ with $Y\subseteq X$ and for $\Psi\subseteq \mathcal{P}^2(X)$, we use the notation $\Psi\cap_2Y$  from \cite{Pie1} for the collection of all families $\mathcal{U}\cap_1 Y=\{U\cap Y: U\in\mathcal{U}\}$ where $\mathcal{U}\in\Psi$. We want to describe $\langle\Psi\cap_2 Y\rangle_Y$ more precisely in the case when $\Psi\cap_2 Y\subseteq \text{EssFin}(\mathcal{P}(Y))$. To do this, we need the concept of a full ring of sets in $Y$ that was of frequent use in \cite{PW}. Namely, a \emph{full ring in $Y$} is a collection $\mathcal{C}\subseteq \mathcal{P}(Y)$ such that $\emptyset, Y\in\mathcal{C}$, while $\mathcal{C}$ is closed under finite unions and under finite intersections. For $\mathcal{A}\subseteq \mathcal{P}(Y)$, let $L_Y[\mathcal{A}]$ be the intersection of all full rings in $Y$ that contain $\mathcal{A}$. 

\begin{proposition} For a set $X$, let $\Psi\subseteq \mathcal{P}^2(X)$. Suppose that $Y\subseteq X$ and that each family from $\Psi$ is essentially finite on $Y$. Then  the following conditions are satisfied: 
\begin{enumerate}
\item[(i)] $\langle \Psi\cap_2 Y\rangle_Y=\text{EssFin}( L_Y[\bigcup(\Psi\cap_2 Y)])=\\\text{EssFin}(\bigcup\langle\Psi\cap_2 Y\rangle_Y)=\text{EssFin}(\bigcup\langle \Psi\rangle_X)\cap_2 Y$;
\item[(ii)] each family from $\langle\Psi\rangle_X$ is essentially finite on $Y$.
\end{enumerate}
\end{proposition}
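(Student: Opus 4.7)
My plan is to establish both parts of the proposition by introducing two auxiliary generalized topologies, one on $X$ to handle (ii), and one on $Y$ to handle (i). I would begin with (ii), as it feeds directly into the last equality in (i). Define
$$\mathcal{C}_0=\{\mathcal{U}\subseteq\mathcal{P}(X):\mathcal{U}\text{ is essentially finite on }Y\text{ and }U\cap Y\in L_Y[\bigcup(\Psi\cap_2 Y)]\text{ for every }U\in\mathcal{U}\}$$
and verify that $\mathcal{C}_0$ is a generalized topology on $X$. Since $\Psi\subseteq\mathcal{C}_0$ by hypothesis, the minimality of $\langle\Psi\rangle_X$ gives $\langle\Psi\rangle_X\subseteq\mathcal{C}_0$, which is precisely (ii) and simultaneously yields the inclusion $\bigcup\langle\Psi\rangle_X\cap_1 Y\subseteq L_Y[\bigcup(\Psi\cap_2 Y)]$ needed below.

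Verifying the five gts axioms for $\mathcal{C}_0$ is mostly bookkeeping, since $L_Y[\cdot]$ is closed under finite unions and intersections and essential finiteness on $Y$ is preserved by the operations appearing in axioms (i)--(iv). Conditions (ii) and (iii) are checked by building the required finite witnesses one element at a time from the witness for the outer family; condition (iv) is immediate because mapping each $V\in\mathcal{V}$ to a containing $U\in\mathcal{U}$ transports a finite witness from $\mathcal{V}$ to $\mathcal{U}$. The one subtle case is axiom (v): given $\mathcal{U}\in\mathcal{C}_0$, $V\subseteq\bigcup\mathcal{U}$ and $V\cap U\in\bigcup\mathcal{C}_0$ for every $U\in\mathcal{U}$, I pick a finite $\mathcal{U}_0\subseteq\mathcal{U}$ witnessing essential finiteness on $Y$; then $V\cap Y\subseteq Y\cap\bigcup\mathcal{U}\subseteq\bigcup\mathcal{U}_0$, so $V\cap Y=\bigcup_{U\in\mathcal{U}_0}(V\cap U\cap Y)$ is a finite union of members of the full ring and hence lies in it.

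For (i), I first prove that $\mathcal{G}:=\text{EssFin}(L_Y[\bigcup(\Psi\cap_2 Y)])$ is a generalized topology on $Y$, by the axiom check above specialized to $X=Y$; since $\Psi\cap_2 Y\subseteq\mathcal{G}$, this yields $\langle\Psi\cap_2 Y\rangle_Y\subseteq\mathcal{G}$. For the reverse inclusion, axiom (i) forces $L_Y[\bigcup(\Psi\cap_2 Y)]\subseteq\bigcup\langle\Psi\cap_2 Y\rangle_Y$: given $\mathcal{W}\in\mathcal{G}$, pick a finite witness $\mathcal{V}\subseteq\mathcal{W}$ with $\bigcup\mathcal{V}=\bigcup\mathcal{W}$, observe that $\mathcal{V}$ is admissible by axiom (i), and apply axiom (iv) to conclude that $\mathcal{W}$ is admissible. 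This settles the first equality in (i), and the second one follows because $\bigcup\mathcal{G}=L_Y[\bigcup(\Psi\cap_2 Y)]$.

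The last equality $\text{EssFin}(\bigcup\langle\Psi\rangle_X)\cap_2 Y=\text{EssFin}(L_Y[\bigcup(\Psi\cap_2 Y)])$ splits into a trace and a lift. The forward inclusion follows from the $\mathcal{C}_0$ step together with the remark that a finite essential-finiteness witness in $X$ traces to one in $Y$. For the reverse, given $\mathcal{W}\in\mathcal{G}$, applying the map $A\mapsto A\cap Y$ to $L_X[\bigcup\Psi]\subseteq\bigcup\langle\Psi\rangle_X$ shows that each $W\in\mathcal{W}$ has the form $\widetilde U_W\cap Y$ for some $\widetilde U_W\in L_X[\bigcup\Psi]$. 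The main obstacle is controlling the essential finiteness of the lifted family: I would fix a finite witness $\mathcal{W}_0\subseteq\mathcal{W}$, set $V:=\bigcup_{W\in\mathcal{W}_0}\widetilde U_W$, and replace each $\widetilde U_W$ by $\widehat U_W:=\widetilde U_W\cap V$. Since $W\subseteq\bigcup\mathcal{W}=V\cap Y$, the trace $\widehat U_W\cap Y$ is still $W$, and every $\widehat U_W\subseteq V$, so $\{\widehat U_W:W\in\mathcal{W}_0\}$ is a finite subfamily with union $V$ witnessing essential finiteness of the lifted family $\{\widehat U_W:W\in\mathcal{W}\}$.
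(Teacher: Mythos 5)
Your argument is correct, but it takes a genuinely different route from the published proof. The paper derives (ii) \emph{from} (i) via the inclusion $\langle \Psi\rangle_X\cap_2 Y\subseteq \langle\Psi\cap_2 Y\rangle_Y$, which it imports from Proposition 2.2.37 of \cite{Pie1} applied to $\text{id}_Y\colon Y\to X$, and it proves (i) by a cyclic chain of inclusions among $\mathcal{G}_0=\langle\Psi\cap_2 Y\rangle_Y$, $\mathcal{G}_1=\text{EssFin}(L_Y[\bigcup(\Psi\cap_2 Y)])$, $\mathcal{G}_2=\text{EssFin}(\bigcup\mathcal{G}_0)$ and $\mathcal{G}_3=\text{EssFin}(\bigcup\langle\Psi\rangle_X)\cap_2 Y$, quoting Proposition 2.2.53 of \cite{Pie1} for the fact that $\mathcal{G}_3$ is a generalized topology. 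You instead prove (ii) first and directly, by exhibiting the explicit generalized topology $\mathcal{C}_0$ in $X$ containing $\Psi$ and invoking minimality of $\langle\Psi\rangle_X$; this simultaneously yields the trace inclusion $(\bigcup\langle\Psi\rangle_X)\cap_1 Y\subseteq L_Y[\bigcup(\Psi\cap_2 Y)]$ and replaces both citations with one self-contained five-axiom check (your handling of axiom (v) via a finite witness $\mathcal{U}_0$ is the one place where that check is not routine, and it is done correctly). What your route buys is independence from \cite{Pie1} and, more importantly, an explicit treatment of the lifting step hidden in the paper's one-sentence claim $\mathcal{G}_1\subseteq\mathcal{G}_3$: your device of cutting each lift down to $\widehat U_W=\widetilde U_W\cap V$ with $V=\bigcup_{W\in\mathcal{W}_0}\widetilde U_W$ is exactly the detail needed to make the lifted family essentially finite, and the published proof leaves it implicit. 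The cost is length. One point worth polishing, since the paper insists on $\mathbf{ZF}$: selecting a lift $\widetilde U_W$ for \emph{every} $W\in\mathcal{W}$ is a simultaneous choice over an arbitrary family. It is avoidable: take as the lifted family \emph{all} $A\in L_X[\bigcup\Psi]$ with $A\subseteq V$ and $A\cap Y\in\mathcal{W}$, which requires only the finitely many choices needed to form $V$ from $\mathcal{W}_0$; the same remark applies to the paper's own terse version of this step.
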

\begin{proof} By applying Proposition 2.2.37 of \cite{Pie1} to the mapping $\text{id}_Y: Y\to X$, we obtain the inclusion $\langle \Psi\rangle_X\cap_2 Y\subseteq \langle\Psi\cap_2 Y\rangle_Y$ which, together with $(i)$, implies $(ii)$. To prove $(i)$, let us put $\mathcal{G}_0=\langle\Psi\cap_2 Y\rangle_Y, \mathcal{G}_1=\text{EssFin}( L_Y[\bigcup(\Psi\cap_2 Y)]), \mathcal{G}_2=\text{EssFin}(\bigcup\mathcal{G}_0)$ and $\mathcal{G}_3=\text{EssFin}(\bigcup\langle \Psi\rangle_X)\cap_2 Y$. Obviously, $\mathcal{G}_0, \mathcal{G}_1$ and $\mathcal{G}_2$ are generalized topologies in $Y$. By Proposition 2.2.53 of \cite{Pie1}, the collection $\mathcal{G}_3$ is also a generalized topology in $Y$. Since $\Psi\cap_2 Y\subseteq \mathcal{G}_1$ and $L_Y[\bigcup(\Psi\cap_2 Y)]\subseteq \bigcup\mathcal{G}_0$, we have $\mathcal{G}_0\subseteq \mathcal{G}_1\subseteq\mathcal{G}_2\subseteq\mathcal{G}_0$. It follows from the inclusion $\langle \Psi\rangle_X\cap_2 Y\subseteq \mathcal{G}_0$ that $\mathcal{G}_3\subseteq \mathcal{G}_0$. 
Since $\bigcup (\langle\Psi\rangle_X \cap_2 Y)$   is a full ring of subsets of $Y$, we get  $\mathcal{G}_1\subseteq \mathcal{G}_3$. This completes our proof to $(i)$.  
\end{proof}

\begin{definition} If $X=(X, \text{Op}, \text{Cov})$ is a gts, then:
\begin{enumerate}
\item[(i)] the \emph{partial topologization} of $X=(X, \text{Op}, \text{Cov})$  is the gts 
$$X_{pt}=(X, (\text{Op})_{pt}, (\text{Cov})_{pt})$$
 where $(\text{Op})_{pt}=\tau(\text{Op})$ and $(\text{Cov})_{pt}=\langle \text{Cov}\cup\text{EssFin}(\tau(\text{Op}))\rangle_X$ (cf. Definition 4.1 of \cite{PW});
\item[(ii)] the gts $X$ is called \emph{partially topological} if $X=X_{pt}$ (cf. Definition 2.2.4 of \cite{Pie1});
\item[(iii)] $\mathbf{GTS}_{pt}$ is the category of all partially topological spaces and strictly continuous mappings, while the mapping $pt:\mathbf{GTS}\to\mathbf{GTS}_{pt}$ is the \emph{functor of partial topologization} defined by: $pt(X)=X_{pt}$ for every gts $X$ and $pt(f)=f$ for every morphism in $\mathbf{GTS}$ (cf. \cite{AHS}, \cite{MacL},  \cite{Pie1} and Definition 4.2 of \cite{PW}).
\end{enumerate}
\end{definition}

\begin{proposition}\label{bornologie-pt}
Let $X$ be a gts. Then $\mathbf{Sm}(X)=\mathbf{Sm}(X_{pt})$, $\mathbf{CB}(X)=\mathbf{CB}(X_{pt})$ and $\mathbf{ACB}(X_{pt})\subseteq\mathbf{ACB}(X)$.
\end{proposition}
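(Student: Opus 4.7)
The plan is to handle the three assertions separately, but in each case the key observation is that $\text{Cov}_X \subseteq (\text{Cov})_{pt}$, together with the structural description $(\text{Cov})_{pt} = \langle \text{Cov}_X \cup \text{EssFin}(\tau(\text{Op}_X))\rangle_X$ and the preceding proposition, which says that essential finiteness on a subset $Y$ is preserved under passing to the generated generalized topology.

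For the equality $\mathbf{Sm}(X) = \mathbf{Sm}(X_{pt})$, the inclusion $\mathbf{Sm}(X_{pt}) \subseteq \mathbf{Sm}(X)$ is immediate: if every member of $(\text{Cov})_{pt}$ is essentially finite on $K$, then so is every member of the subfamily $\text{Cov}_X$. For the reverse inclusion, fix $K \in \mathbf{Sm}(X)$ and set $\Psi = \text{Cov}_X \cup \text{EssFin}(\tau(\text{Op}_X))$. Each family from $\text{Cov}_X$ is essentially finite on $K$ by smallness, and each family from $\text{EssFin}(\tau(\text{Op}_X))$ is essentially finite on $X$, hence a fortiori on $K$. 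Applying part (ii) of the preceding proposition to this $\Psi$ with $Y = K$ yields that every family in $\langle \Psi \rangle_X = (\text{Cov})_{pt}$ is essentially finite on $K$, so $K \in \mathbf{Sm}(X_{pt})$.

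For $\mathbf{CB}(X) = \mathbf{CB}(X_{pt})$, I would observe that $\text{Op}_{X_{pt}} = \tau(\text{Op}_X)$ is already a topology, whence $(X_{pt})_{top} = (X, \tau(\tau(\text{Op}_X))) = (X, \tau(\text{Op}_X)) = X_{top}$. The equality of compact bornologies then follows from the definition $\mathbf{CB}(Z) := \mathbf{CB}(Z_{top})$. For the inclusion $\mathbf{ACB}(X_{pt}) \subseteq \mathbf{ACB}(X)$, it suffices to show that any admissibly compact $C$ in $X_{pt}$ is admissibly compact in $X$: given $\mathcal{U} \in \text{Cov}_X$ with $C \subseteq \bigcup \mathcal{U}$, the inclusion $\text{Cov}_X \subseteq (\text{Cov})_{pt}$ places $\mathcal{U}$ in $(\text{Cov})_{pt}$, so the admissible compactness of $C$ in $X_{pt}$ furnishes a finite $\mathcal{V} \subseteq \mathcal{U}$ covering $C$.

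The only non-formal step is the reverse inclusion in the smallness statement, and that is precisely where the preceding proposition earns its keep; once one notices that the family $\text{EssFin}(\tau(\text{Op}_X))$ that enlarges $\text{Cov}_X$ to $(\text{Cov})_{pt}$ consists of families essentially finite on all of $X$, the proposition automatically propagates essential finiteness from $\Psi$ to $\langle \Psi \rangle_X$ on any chosen subset $K$, so no real obstacle remains.
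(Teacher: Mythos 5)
Your proof is correct and follows essentially the same route as the paper: the paper likewise dismisses $\mathbf{CB}(X)=\mathbf{CB}(X_{pt})$, $\mathbf{Sm}(X_{pt})\subseteq\mathbf{Sm}(X)$ and $\mathbf{ACB}(X_{pt})\subseteq\mathbf{ACB}(X)$ as trivial, and for $\mathbf{Sm}(X)\subseteq\mathbf{Sm}(X_{pt})$ applies Proposition 2.12(ii) to $\Psi=\text{Cov}_X\cup\text{EssFin}(\tau(\text{Op}_X))$ exactly as you do. Your write-up merely spells out the details the paper leaves implicit.
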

\begin{proof}
The equality $\mathbf{CB}(X)=\mathbf{CB}(X_{pt})$ and both the inclusions $\mathbf{Sm}(X_{pt})\subseteq\mathbf{Sm}(X)$ and $\mathbf{ACB}(X_{pt})\subseteq\mathbf{ACB}(X)$ are trivial. Let $X=(X, \text{Op}_X, \text{Cov}_X)$ and let $\Psi=\text{Cov}_X\cup\text{EssFin}(\tau(\text{Op}_X))$. Suppose that $Y\in\mathbf{Sm}(X)$. Since each family from $\Psi$ is essentially finite on $Y$, we infer from Proposition 2.12 that $Y\in\mathbf{Sm}(X_{pt})$.
\end{proof}

\begin{definition}[cf. Proposition 2.2.71 of \cite{Pie1}] Let $\mathcal{L}$ be a full ring of subsets of a set $X$. Then:
\begin{enumerate}
\item[(i)] for a collection $\mathcal{B}\subseteq \mathcal{P}(X)$, we define
$$\text{EF}(\mathcal{L}, \mathcal{B})=\{\mathcal{U}\subseteq\mathcal{L}: \forall 
_{A\in\mathcal{B}} \{A\cap U: U\in\mathcal{U}\}\in\text{EssFin}(\mathcal{P}(A))\};$$
\item[(ii)] for a topology $\tau$ in $X$ and for a bornology $\mathcal{B}$  in $X$, 
the \emph{gts induced by the bornological universe} $((X, \tau), \mathcal{B})$ is $\text{gts}((X, \tau),\mathcal{B})=(X, \tau,\text{EF}(\tau, \mathcal{B}))$.
\end{enumerate}
\end{definition}

In the light of the proof to Proposition 2.1.31 in \cite{Pie2}, we have the following fact:

\begin{f} Suppose that $((X, \tau), \mathcal{B})$ is a bornological universe such that $\tau\cap\mathcal{B}$ is a base for $\mathcal{B}$. Then $\mathbf{Sm}((X, \tau, \text{EF}(\tau, \mathcal{B})))=\mathcal{B}$.
\end{f}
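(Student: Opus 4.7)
The plan is to prove the equality by two inclusions, using the definition of $\text{EF}(\tau,\mathcal{B})$ for one direction and choosing a clever test-cover for the other.

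For the inclusion $\mathcal{B}\subseteq\mathbf{Sm}((X,\tau,\text{EF}(\tau,\mathcal{B})))$, I would fix $K\in\mathcal{B}$ and an arbitrary admissible family $\mathcal{U}\in\text{EF}(\tau,\mathcal{B})$, and read off the definition of $\text{EF}(\tau,\mathcal{B})$: the trace $\{K\cap U:U\in\mathcal{U}\}$ is essentially finite on $K$, so there is a finite $\mathcal{U}_0\subseteq\mathcal{U}$ with $K\cap\bigcup_{U\in\mathcal{U}}(K\cap U)\subseteq\bigcup_{U\in\mathcal{U}_0}(K\cap U)$. Rewriting the left side as $K\cap\bigcup\mathcal{U}$ and dropping the intersection with $K$ on the right gives $K\cap\bigcup\mathcal{U}\subseteq\bigcup\mathcal{U}_0$, which is exactly the condition that $\mathcal{U}$ is essentially finite on $K$. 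Hence $K$ is small.

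For the reverse inclusion $\mathbf{Sm}((X,\tau,\text{EF}(\tau,\mathcal{B})))\subseteq\mathcal{B}$, the key move is to consider the specific family $\mathcal{U}:=\tau\cap\mathcal{B}$, i.e.\ the collection of all open bounded sets. First I would check that $\mathcal{U}\in\text{EF}(\tau,\mathcal{B})$: given any $A\in\mathcal{B}$, the assumption that $\tau\cap\mathcal{B}$ is a base for $\mathcal{B}$ yields $U_0\in\tau\cap\mathcal{B}$ with $A\subseteq U_0$, so the singleton $\{A\cap U_0\}=\{A\}$ already dominates the trace $\{A\cap U:U\in\mathcal{U}\}$, which is thus essentially finite on $A$. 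Next, since singletons belong to $\mathcal{B}$ and $\tau\cap\mathcal{B}$ is a base, every point of $X$ lies in some member of $\mathcal{U}$, so $\bigcup\mathcal{U}=X$.

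Now I would take any $K\in\mathbf{Sm}((X,\tau,\text{EF}(\tau,\mathcal{B})))$. Applying smallness to this particular $\mathcal{U}$ produces a finite subfamily $\{U_1,\dots,U_n\}\subseteq\tau\cap\mathcal{B}$ with $K=K\cap\bigcup\mathcal{U}\subseteq U_1\cup\cdots\cup U_n$. Since $\mathcal{B}$ is a bornology, it is closed under finite unions and under taking subsets, so $K\in\mathcal{B}$, completing the proof. There is no real obstacle here: the only nontrivial step is recognising that $\tau\cap\mathcal{B}$ itself is an admissible family in the induced gts, and this is exactly where the hypothesis that $\tau\cap\mathcal{B}$ is a base for $\mathcal{B}$ is used (both to verify the $\text{EF}$-condition and to conclude that the family covers $X$).
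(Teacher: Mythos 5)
Your proof is correct: the forward inclusion is immediate from the definition of $\text{EF}(\tau,\mathcal{B})$, and the reverse inclusion via the test family $\tau\cap\mathcal{B}$ (which the base hypothesis makes both admissible and a cover of $X$) is exactly the right move. The paper itself gives no proof, deferring to Proposition 2.1.31 of \cite{Pie2}, and your argument is the standard one that that reference encapsulates, so there is nothing to add.
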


\begin{definition}[cf. Example 2.1.12 of \cite{Pie2}]
 For a (quasi)-(pseudo)metric $d$ on a set $X$, the triple $(X, \tau(d), \text{EF}(\tau(d), \mathcal{B}(d,X)))$ will be called the \emph{gts induced by the (quasi)-(pseudo)metric} $d$.
\end{definition}
 
\begin{f}[cf. Example 2.1.12 of \cite{Pie2}]
 If $d$ is a quasi-(pseudo)metric on a set $X$,  then $ \text{EF}(\tau(d), \mathcal{B}(d,X))$ is a generalized topology in $X$ and  
 $$\mathbf{Sm}((X, \text{EF}(\tau(d), \mathcal{B}(d,X)) )=\mathcal{B}(d,X).$$
\end{f}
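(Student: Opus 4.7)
The plan is to reduce the claim to the preceding Fact, which asserts that for a bornological universe $((X,\tau),\mathcal{B})$ satisfying that $\tau\cap\mathcal{B}$ is a base for $\mathcal{B}$, one has $\mathbf{Sm}((X,\tau,\text{EF}(\tau,\mathcal{B})))=\mathcal{B}$. Setting $\tau=\tau(d)$ and $\mathcal{B}=\mathcal{B}(d,X)$, the equality in the statement will follow once we verify that $\tau(d)\cap\mathcal{B}(d,X)$ is a base for $\mathcal{B}(d,X)$.

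First I would dispose of the case $X=\emptyset$ by the convention recorded in the definition of $d$-boundedness, and then check, for non-empty $X$, that $\mathcal{B}(d,X)$ is indeed a bornology: it contains every singleton since $\{x\}\subseteq B_d(x,1)$; it is hereditary with respect to subsets by definition; and it is closed under finite unions, since given $A_i\subseteq B_d(x_i,r_i)$ for $i=1,\ldots,n$, the triangle inequality gives $A_i\subseteq B_d(x_1,R)$ with $R=\max_{1\leq i\leq n}(d(x_1,x_i)+r_i)$.

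The key technical step is to verify that every $d$-ball belongs to $\tau(d)$, for then each $B_d(x,r)$ lies in $\tau(d)\cap\mathcal{B}(d,X)$ and, since every $d$-bounded set is by definition a subset of some $d$-ball, these balls form a base for $\mathcal{B}(d,X)$. Given $y\in B_d(x,r)$, set $s=r-d(x,y)>0$ and pick $n\in\omega$ with $1/2^n\leq s$; the triangle inequality then yields $d(x,z)\leq d(x,y)+d(y,z)<d(x,y)+s=r$ for every $z\in B_d(y,1/2^n)$, so $B_d(y,1/2^n)\subseteq B_d(x,r)$ and therefore $B_d(x,r)\in\tau(d)$.

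With this in place, the preceding Fact yields the desired equality, and the assertion that $\text{EF}(\tau(d),\mathcal{B}(d,X))$ is a generalized topology in $X$ follows from Proposition 2.2.71 of \cite{Pie1} cited in the definition of $\text{gts}((X,\tau),\mathcal{B})$, upon observing that the topology $\tau(d)$ is a full ring of subsets of $X$. The only mildly delicate point is the openness of $d$-balls in the quasi-metric setting, where the reverse triangle inequality is unavailable; the one-sided estimate above is all that is needed.
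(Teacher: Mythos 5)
Your proposal is correct, and it follows the route the paper clearly intends: the statement is placed immediately after Fact 2.16 precisely so that it reduces to verifying that $\tau(d)\cap\mathcal{B}(d,X)$ is a base for $\mathcal{B}(d,X)$, which your one-sided triangle-inequality argument for the $\tau(d)$-openness of $d$-balls establishes (the paper itself gives no proof, deferring to Example 2.1.12 of \cite{Pie2}). Your auxiliary checks -- that $\mathcal{B}(d,X)$ is a bornology and that $\tau(d)$ is a full ring so that Proposition 2.2.71 of \cite{Pie1} applies -- are exactly the points that need to be supplied, and they are handled correctly.
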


\section{$\mathcal{B}$-(quasi)-(pseudo)metrization of gtses}

\begin{definition} 
Let $X$ be a gts and let $\mathcal{S}$ be either $\mathbf{CB}$ or $\mathbf{ACB}$, or $\mathbf{Sm}$. Then we say that $X$ is $\mathcal{S}$-(quasi)-(pseudo)metrizable if $X$ is (quasi)-(pseudo)metrizable with respect to $\mathcal{S}(X)$.
\end{definition}

With Proposition \ref{bornologie-pt} in hand, we can immediately deduce that the following proposition holds:

\begin{proposition} Let $\mathcal{S}$ be either $\mathbf{CB}$ or $\mathbf{Sm}$. Then the following are equivalent for a gts $X$:
\begin{enumerate}
\item[(i)] $X$ is $\mathcal{S}$-(quasi)-(pseudo)metrizable,
\item[(ii)]  $X_{pt}$ is $\mathcal{S}$-(quasi)-(pseudo)metrizable. 
\end{enumerate}
\end{proposition}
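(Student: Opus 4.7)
The plan is to unwind both sides through the definitions of $\mathcal{S}$-(quasi)-(pseudo)metrizability and reduce to the observation that the two bornological universes $(X_{top}, \mathcal{S}(X))$ and $((X_{pt})_{top}, \mathcal{S}(X_{pt}))$ actually coincide. Once that coincidence is established, the equivalence is immediate since (quasi)-(pseudo)metrizability is a property of the bornological universe.

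First I would observe that $(X_{pt})_{top} = X_{top}$. Indeed, by the definition of partial topologization we have $(\text{Op}_X)_{pt} = \tau(\text{Op}_X)$, which is already a topology; hence applying $\tau$ again is idempotent and gives
\[
(X_{pt})_{top} = (X, \tau(\tau(\text{Op}_X))) = (X, \tau(\text{Op}_X)) = X_{top}.
\]
Next, I would invoke Proposition \ref{bornologie-pt}, which tells us $\mathbf{Sm}(X) = \mathbf{Sm}(X_{pt})$ and $\mathbf{CB}(X) = \mathbf{CB}(X_{pt})$. Therefore, for $\mathcal{S} \in \{\mathbf{Sm}, \mathbf{CB}\}$, we have $\mathcal{S}(X) = \mathcal{S}(X_{pt})$.

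Combining these two facts, the bornological universes $(X_{top}, \mathcal{S}(X))$ and $((X_{pt})_{top}, \mathcal{S}(X_{pt}))$ are literally the same ordered pair. By the definition of $\mathcal{S}$-(quasi)-(pseudo)metrizability (Definition 3.1 together with Definition 1.7), the gts $X$ is $\mathcal{S}$-(quasi)-(pseudo)metrizable iff $(X_{top}, \mathcal{S}(X))$ is (quasi)-(pseudo)metrizable in the usual bornological sense, and similarly for $X_{pt}$. Hence (i) and (ii) assert quasi-metrizability of the very same object, and the equivalence follows with no further argument.

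There is essentially no obstacle here, the content being entirely concentrated in Proposition \ref{bornologie-pt} (which was already proved) and in the idempotence of $\tau$. The only thing to watch is that the statement genuinely fails for $\mathcal{S} = \mathbf{ACB}$, since Proposition \ref{bornologie-pt} only gives the one-sided inclusion $\mathbf{ACB}(X_{pt}) \subseteq \mathbf{ACB}(X)$; this is why the statement is restricted to $\mathbf{CB}$ and $\mathbf{Sm}$.
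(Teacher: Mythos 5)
Your argument is correct and is essentially the paper's own: the paper states the proposition as an immediate consequence of Proposition~2.14 without further detail, and your proof simply makes explicit the two facts that deduction rests on, namely $(X_{pt})_{top}=X_{top}$ (idempotence of $\tau$) and $\mathcal{S}(X)=\mathcal{S}(X_{pt})$ for $\mathcal{S}\in\{\mathbf{Sm},\mathbf{CB}\}$. Your closing remark about why $\mathbf{ACB}$ is excluded also matches the paper's Remark~3.3.
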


\begin{remark} If $X$ is a gts, then the $\mathbf{ACB}$-(quasi)-(pseudo)metrizability of $X_{pt}$ is the (quasi)-(pseudo)metrizability of $X_{pt}$ with respect to $\mathbf{ACB}(X_{pt})$, while the $\mathbf{ACB}$-(quasi)-(pseudo)metrizability of $X$ is equivalent to the (quasi)-(pseudo)metri\-za\-bi\-li\-ty of $X_{pt}$ with respect to $\mathbf{ACB}(X)$. We do not know whether the $\mathbf{ACB}$-(quasi)-(pseudo)\-met\-ri\-zability of $X$ is equivalent to the $\mathbf{ACB}$-(quasi)-(pseudo)metri\-za\-bi\-li\-ty of $X_{pt}$.
\end{remark} 

\begin{definition} 
A gts $X=(X, \text{Op}_X, \text{Cov}_X)$ is called:
\begin{enumerate}
\item[(i)] \emph{locally small} if there exists $\mathcal{U}\in\text{Cov}_X$ such that $\mathcal{U}\subseteq \mathbf{Sm}(X)$ and $X=\bigcup\mathcal{U}$ (cf. Definition 2.1.1 of \cite{Pie2});
\item[(ii)] \emph{weakly locally small} if there exists a collection $\mathcal{U}\subseteq \text{Op}_X\cap\mathbf{Sm}(X)$ such that $X=\bigcup\mathcal{U}$.
\end{enumerate}
\end{definition}

Our next theorem says about the form of the partial topologization of an $\mathbf{Sm}$-(quasi)-(pseudo)metrizable gts $X$ when $X_{pt}$ is locally small. 

\begin{theorem} Suppose that $X=(X, \text{Op}, \text{Cov})$ is a gts such that its partial topologization $X_{pt}=(X,\text{Op}_{pt}, \text{Cov}_{pt})$ is locally small. Then the following conditions are equivalent:
\begin{enumerate}
\item[(i)]$X$ is $\mathbf{Sm}$-(quasi)-(pseudo)metrizable;
\item[(ii)] $X_{pt}$ is induced by some (quasi)-(pseudo)metric $d$.
\end{enumerate}
\end{theorem}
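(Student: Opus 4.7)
The plan is to reduce both implications to identifying $X_{pt}=(X,\tau(\text{Op}),(\text{Cov})_{pt})$ with the candidate gts $(X,\tau(d),\text{EF}(\tau(d),\mathcal{B}(d,X)))$ for a suitable quasi-(pseudo)metric $d$ on $X$. Agreement of the first two coordinates is precisely the topological requirement $\tau(\text{Op})=\tau(d)$ appearing in the definition of $\mathbf{Sm}$-(quasi)-(pseudo)metrizability. Agreement of the small bornologies is automatic: Proposition \ref{bornologie-pt} gives $\mathbf{Sm}(X)=\mathbf{Sm}(X_{pt})$, while the Fact after Definition 2.17 says that $\mathbf{Sm}$ of the gts induced by $d$ equals $\mathcal{B}(d,X)$. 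So the real issue in either direction is agreement of the third coordinate, that is, $(\text{Cov})_{pt}=\text{EF}(\tau(d),\mathcal{B}(d,X))$.

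For (ii) $\Rightarrow$ (i) there is essentially nothing to do: the hypothesis that $X_{pt}$ is induced by $d$ forces $\tau(\text{Op})=(\text{Op})_{pt}=\tau(d)$, and $\mathbf{Sm}(X)=\mathcal{B}(d,X)$ by the bornology identifications above; hence $d$ quasi-(pseudo)metrizes the bornological universe $(X_{top},\mathbf{Sm}(X))$, witnessing (i).

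The substantive direction is (i) $\Rightarrow$ (ii). Fix a quasi-(pseudo)metric $d$ witnessing (i); it remains to show $(\text{Cov})_{pt}=\text{EF}(\tau(d),\mathcal{B}(d,X))$. The inclusion $\subseteq$ is immediate because any $\mathcal{V}\in(\text{Cov})_{pt}$ is a family of $\tau(d)$-open sets essentially finite on every small set of $X_{pt}$, and every $d$-bounded set is small in $X_{pt}$. For $\supseteq$, I invoke local smallness of $X_{pt}$: pick $\mathcal{U}\in(\text{Cov})_{pt}$ with $\mathcal{U}\subseteq\mathbf{Sm}(X_{pt})$ and $\bigcup\mathcal{U}=X$. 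Given $\mathcal{V}\in\text{EF}(\tau(d),\mathcal{B}(d,X))$, set $W=\bigcup\mathcal{V}\in\tau(d)=(\text{Op})_{pt}$. Axiom (ii) of a gts makes $\mathcal{U}'=\{U\cap W:U\in\mathcal{U}\}$ admissible in $X_{pt}$; for each $U'=U\cap W\in\mathcal{U}'$ the trace $\{V\cap U':V\in\mathcal{V}\}$ is essentially finite on $U'$ (since $U$, hence $U'$, is $d$-bounded), so its finite essentializing subfamily is admissible by axiom (i), and axiom (iv) then promotes the full trace to an admissible family with union $U'$. Axiom (iii) glues these traces into admissibility of $\{V\cap U:V\in\mathcal{V},U\in\mathcal{U}\}$, and a final application of axiom (iv) (with $\mathcal{V}$ as the open family and this glued family as the admissible refinement: each $V\cap U$ lies in $V\in\mathcal{V}$, and both unions equal $W$) yields $\mathcal{V}\in(\text{Cov})_{pt}$.

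The main obstacle is the careful orchestration of axioms (i)--(iv) in the previous paragraph, especially verifying the matching-of-unions and refinement-by-members hypotheses at each invocation of axiom (iv) and the admissible-cover-of-each-piece hypothesis at axiom (iii). It is also essential that local smallness is hypothesised for $X_{pt}$ rather than $X$: without an admissible cover of $X$ by $d$-bounded open sets sitting inside $(\text{Cov})_{pt}$ itself, the essentially-finite-on-$\mathcal{B}(d,X)$ condition on $\mathcal{V}$ could not be localised to admissibility. Everything else is dictionary work between the definitions of $\mathbf{Sm}$-(quasi)-(pseudo)metrizability and of the gts induced by a quasi-(pseudo)metric.
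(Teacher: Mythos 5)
Your proof is correct and its overall skeleton matches the paper's: both directions rest on the identification $\mathbf{Sm}(X)=\mathbf{Sm}(X_{pt})$ from Proposition \ref{bornologie-pt}, the implication (ii) $\Rightarrow$ (i) is immediate, and the substance of (i) $\Rightarrow$ (ii) is showing that $X_{pt}$ coincides with the gts induced by the witnessing $d$. The one real difference is how that last step is discharged: the paper disposes of it in a single line by citing Proposition 2.1.18 of \cite{Pie2} (which states that a locally small partially topological gts whose topology is $\tau(d)$ and whose small sets are exactly the $d$-bounded sets is induced by $d$), whereas you reprove that fact in place, verifying $(\text{Cov})_{pt}=\text{EF}(\tau(d),\mathcal{B}(d,X))$ directly from axioms (i)--(iv) of a gts. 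Your orchestration of the axioms checks out: the trace of $\mathcal{V}$ on each $d$-bounded $U\cap W$ is essentialized by a finite subfamily (axiom (i)), promoted by axiom (iv), glued by axiom (iii) along the admissible small cover supplied by local smallness of $X_{pt}$ (with the canonical choice of $\mathcal{V}(U')$, so no choice principle is needed), and the final axiom (iv) application has matching unions and refinement by members. What your version buys is self-containedness --- the reader need not consult \cite{Pie2} --- at the cost of length; the paper's version buys brevity by delegating exactly the lemma you reconstructed.
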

\begin{proof} In view of Proposition \ref{bornologie-pt}, we have $\mathbf{Sm}(X)=\mathbf{Sm}(X_{pt})$. In consequence, it it is obvious that if $X_{pt}$ is induced by a (quasi)-(pseudo)metric $d$, then $X$ is $\mathbf{Sm}$-(quasi)-(pseudo)metrizable.  Assume that $X$ is $\mathbf{Sm}$-(quasi)-(pseudo)metrizable and that $d$ is a (quasi)-(pseudo)metric on $X$ such that $\tau(\text{Op})=\tau(d)$ and $\mathbf{Sm}(X_{pt})$ is the collection of all $d$-bounded sets. Since $X_{pt}$ is locally small, it follows from Proposition 2.1.18 of \cite{Pie2} that $X_{pt}$ is induced by $d$. 
\end{proof} 

\begin{f} If a gts $X$ is induced by a (quasi)-(pseudo)metric, then $X$ is locally small and partially topological.
\end{f}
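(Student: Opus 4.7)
The plan is to unpack the hypothesis: we are given $X = (X, \tau(d), \mathrm{EF}(\tau(d), \mathcal{B}(d,X)))$ for some quasi-(pseudo)metric $d$, and by the preceding fact we know $\mathbf{Sm}(X) = \mathcal{B}(d,X)$. So ``small'' and ``$d$-bounded'' are synonymous here, and both parts of the claim reduce to manipulations with the definition of $\mathrm{EF}$.

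For \emph{local smallness}, I would exhibit the cover $\mathcal{U} = \{B_d(x,n) : x \in X,\ n \in \mathbb{N}\}$. Each ball is in $\tau(d)$ (a short triangle-inequality argument shows $d$-balls are $\tau(d)$-open even for quasi-pseudo-metrics), each is trivially $d$-bounded hence small, and $\bigcup\mathcal{U} = X$ since $x \in B_d(x,1)$. The only nontrivial point is to check that $\mathcal{U}$ belongs to $\mathrm{Cov}_X = \mathrm{EF}(\tau(d),\mathcal{B}(d,X))$; for any $A \in \mathcal{B}(d,X)$, choose $x_0 \in X$ and $r > 0$ with $A \subseteq B_d(x_0,r)$ and pick an integer $n > r$, so that the singleton $\{B_d(x_0,n)\} \subseteq \mathcal{U}$ witnesses essential finiteness of $\mathcal{U}$ on $A$. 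Thus $\mathcal{U} \in \mathrm{Cov}_X$ and $X$ is locally small.

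For \emph{partial topologization}, by definition $X_{pt} = (X, \tau(\mathrm{Op}_X), \langle \mathrm{Cov}_X \cup \mathrm{EssFin}(\tau(\mathrm{Op}_X))\rangle_X)$. Since $\mathrm{Op}_X = \tau(d)$ is already a topology, $\tau(\mathrm{Op}_X) = \tau(d) = \mathrm{Op}_X$, so the open sets of $X$ and $X_{pt}$ coincide. By the fact cited after Definition 2.20, $\mathrm{Cov}_X$ is already a generalized topology, so to get $\langle \mathrm{Cov}_X \cup \mathrm{EssFin}(\tau(d))\rangle_X = \mathrm{Cov}_X$ it suffices to check $\mathrm{EssFin}(\tau(d)) \subseteq \mathrm{Cov}_X$. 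But if $\mathcal{V}\subseteq\mathcal{U}$ is finite with $\bigcup\mathcal{U}\subseteq\bigcup\mathcal{V}$, then for every $A\subseteq X$ we have $A\cap\bigcup\mathcal{U}\subseteq\bigcup\mathcal{V}$, so an $\mathrm{EssFin}$-family on $X$ is automatically essentially finite on every $d$-bounded set, giving the desired inclusion. Hence $X = X_{pt}$.

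There is no serious obstacle: the only subtle point is remembering that in the quasi-pseudo-metric setting $d$-balls are still open in $\tau(d)$ (so the cover $\mathcal{U}$ actually lies in $\tau(d)$) and correctly reading the quantification in $\mathrm{EF}(\mathcal{L},\mathcal{B})$. Both verifications are routine once the definitions have been unwound.
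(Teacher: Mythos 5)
Your proof is correct. The paper states this as a Fact without any proof, and your argument is exactly the natural verification it leaves to the reader: the cover by balls $\{B_d(x,n): x\in X,\ n\in\mathbb{N}\}$ witnesses local smallness (with the key check that it lies in $\mathrm{EF}(\tau(d),\mathcal{B}(d,X))$ via a single large ball containing any bounded set), and the inclusion $\mathrm{EssFin}(\tau(d))\subseteq\mathrm{EF}(\tau(d),\mathcal{B}(d,X))$ together with the fact that $\mathrm{EF}(\tau(d),\mathcal{B}(d,X))$ is already a generalized topology gives $X=X_{pt}$.
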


\begin{f} \begin{enumerate} \item[(i)]If $X$ is a locally small gts, then $X_{pt}$ is locally small.
\item[(ii)] If a gts $X$ is such that $X_{pt}$ is locally small, then $X$ is weakly locally small.
\item[(iii)]  A gts $X$ is weakly locally small if and only if $X_{pt}$ is weakly locally small.
\end{enumerate}
\end{f}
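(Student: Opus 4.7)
The plan is to reduce all three parts to two facts already in hand: by Proposition \ref{bornologie-pt} we have $\mathbf{Sm}(X)=\mathbf{Sm}(X_{pt})$, and by condition (i) of the gts axioms the family $\text{Op}_X$ is closed under finite intersections and therefore forms a base for the topology $\tau(\text{Op}_X)=\text{Op}_{pt}$. The whole argument is then a matter of relocating witnesses from $\text{Cov}_X$ into $\text{Cov}_{pt}$, or refining $\text{Op}_{pt}$-open sets back into $\text{Op}_X$-open sets, and observing that $d$-boundedness (i.e.\ smallness) is preserved in both directions.

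For (i), if $\mathcal{U}\in\text{Cov}_X$ witnesses local smallness of $X$, then $\mathcal{U}\in\text{Cov}_{pt}$ (since $\text{Cov}_X\subseteq\text{Cov}_{pt}$) and $\mathcal{U}\subseteq\mathbf{Sm}(X)=\mathbf{Sm}(X_{pt})$, so the same $\mathcal{U}$ witnesses local smallness of $X_{pt}$. The forward implication of (iii) is even easier: any family $\mathcal{U}\subseteq\text{Op}_X\cap\mathbf{Sm}(X)$ covering $X$ automatically satisfies $\mathcal{U}\subseteq\text{Op}_{pt}\cap\mathbf{Sm}(X_{pt})$ because $\text{Op}_X\subseteq\tau(\text{Op}_X)=\text{Op}_{pt}$ and $\mathbf{Sm}(X)=\mathbf{Sm}(X_{pt})$.

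For (ii) and the converse of (iii), I would start from a cover by $\text{Op}_{pt}$-open small sets and refine it. Suppose $\mathcal{U}\subseteq\text{Op}_{pt}\cap\mathbf{Sm}(X_{pt})$ with $\bigcup\mathcal{U}=X$ (in case (ii) this $\mathcal{U}$ comes from $\text{Cov}_{pt}$, so in particular $\mathcal{U}\subseteq\text{Op}_{pt}$). Because $\text{Op}_X$ is a base for $\tau(\text{Op}_X)$, for each $U\in\mathcal{U}$ we can choose a family $\mathcal{W}(U)\subseteq\text{Op}_X$ with $\bigcup\mathcal{W}(U)=U$. Each $W\in\mathcal{W}(U)$ is a subset of the small set $U\in\mathbf{Sm}(X_{pt})=\mathbf{Sm}(X)$, hence $W\in\mathbf{Sm}(X)$ by the ideal property of a bornology. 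Therefore $\mathcal{V}=\bigcup_{U\in\mathcal{U}}\mathcal{W}(U)\subseteq\text{Op}_X\cap\mathbf{Sm}(X)$ and $\bigcup\mathcal{V}=X$, which shows that $X$ is weakly locally small in both cases.

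The only conceptual step worth flagging is the use of Proposition \ref{bornologie-pt} to identify the two bornologies of small sets; once that is invoked, nothing beyond the observation that $\text{Op}_X$ is a base for $\text{Op}_{pt}$ is needed, and there is no genuine obstacle. No appeal to any form of choice is required, since the refining families $\mathcal{W}(U)$ can be chosen canonically as $\mathcal{W}(U)=\{W\in\text{Op}_X:W\subseteq U\}$.
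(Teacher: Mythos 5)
Your proof is correct. The paper states this as a Fact without proof, and your argument --- transferring witnesses via $\mathbf{Sm}(X)=\mathbf{Sm}(X_{pt})$ (Proposition 2.14), noting $\text{Cov}_X\subseteq\text{Cov}_{pt}$ and $\text{Op}_X\subseteq\text{Op}_{pt}$, and refining $\tau(\text{Op}_X)$-open sets by the base $\text{Op}_X$ while using that the small sets form an ideal --- is exactly the intended routine verification; your canonical choice $\mathcal{W}(U)=\{W\in\text{Op}_X: W\subseteq U\}$ also correctly keeps the argument within $\mathbf{ZF}$.
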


In every model for $\mathbf{ZF+CC}$, we are going to present a construction of an example of a weakly locally small gts $X$ such that $X_{pt}$ is not locally small. 
For $\Psi\subseteq \mathcal{P}^{2}(X)$,  we put $\Psi_0=\Psi$ and, for $n\in\omega$, assuming that the collection $\Psi_{n}\subseteq \mathcal{P}^2(X)$ has been defined, we put $\Psi_{n+1}=(\Psi_n)^+$ where $^+$ is the operator described in the proof of Proposition 2.2.37 in \cite{Pie1}. Then $\langle\Psi\rangle_X=\bigcup_{n\in\omega}\Psi_n$. The symbols $\cup_1, \cap_1, \cup_2, \cap_2$ have the same meaning as in \cite{Pie1}. We shall use the terminology from Definition 2.2.2 of \cite{Pie1}.

\begin{example}$[\mathbf{ZF+CC}]$. Suppose that  $Y$ is an uncountable set. For $n\in\omega$, we put $Y_n=Y\times\{n\}$. Let $X=\bigcup_{n\in\omega} Y_n$, 
$\text{Op}_X=\{ A\subseteq X: \mbox{ for each }n\in \omega \quad
A\cap Y_n \in \mathbf{FB}(Y_n)\} \cup\{ X\}$ 
and $\text{Cov}_X=\text{EF}(\text{Op}_X, \{ Y_n: n\in\omega\})$. The gts $X=(X, \text{Op}_X, \text{Cov}_X)$ is weakly locally small and not small. If $X$ were locally small, then  $Y_0$ would be a subset of a small open set (Fact 2.1.21 in \cite{Pie2}), so $Y_0$ would be finite. Hence, $X$ is not locally small. We have $\{ Y_n: n\in \omega\}\in\text{EF}(\tau(\text{Op}_X), \{Y_n: n\in\omega\})$ and all the sets $Y_n$ are small and open in $(X, \text{EF}(\tau(\text{Op}_X),$ $\{Y_n: n\in\omega\}))$, so the gts $(X, \text{EF}(\tau(\text{Op}_X), \{Y_n: n\in\omega\}))$ is locally small. We put $\Psi=\text{Cov}_X\cup \text{EssFin}(\tau(\text{Op}_X))$. Then $pt(\text{Cov}_X)=\langle\Psi\rangle_X$ is the generalized topology of  $X_{pt}$. By Proposition 2.12, $\langle\Psi\rangle_X\subseteq  \text{EF}(\tau(\text{Op}_X), \{Y_n: n\in\omega\}) $.  Surprisingly, if $\mathbf{CC}$ holds, then  $X_{pt}$ is not locally small and, in consequence, $\langle \Psi\rangle_X\subset \text{EF}(\tau(\text{Op}_X), \{Y_n: n\in\omega\})$. To prove this, let us assume $\mathbf{ZF+CC}$. It is easy to observe the following facts:\\
\textbf{Fact 1.} $X\notin [X]^{\leq\omega}\cup_1 \mathbf{Sm}(X)$.\\
\textbf{Fact 2.}   Each $\Psi_n (n\in \omega)$ is closed with respect to restriction: $\Psi_n\cap_2 A\subseteq \Psi_n$ for $A\subseteq X$. 
(Notice that $\tau(\text{Op}_X)=\mathcal{P}(X)$  and $n=0$ is the hardest case.)\\
For $\mathcal{W}\subseteq\mathcal{P}(X)$, let us consider the following property: 

$\mathbf{P}(\mathcal{W})$: $\mathcal{W}$ has an uncountable member and $\mathcal{W}\subseteq [X]^{\leq\omega}\cup_1 \mathbf{Sm}(X)$.\\
For $n\in\omega$, let $T(n)$ be the statement:

$T(n)$:  if $\mathcal{W}\in\Psi_n$  has $\mathbf{P}(\mathcal{W})$, then $\mathcal{W}$ is essentially finite on $X\setminus A$ for some countable  $A\subseteq X$.\\
We are going to prove by induction that the following fact holds:\\
\textbf{Fact 3.} $T(n)$ is true for each $n\in \omega$.
\begin{proof}
Let $\mathcal{W}\in\Psi_0$ have property $\mathbf{P}(\mathcal{W})$.  Then, by Fact 1,  $X\notin\mathcal{W}$ and $\mathcal{W}\notin\text{Cov}_X$. Hence $\mathcal{W}\in \text{EssFin}(\tau(\text{Op}_X))$
and $T(0)$ holds. Suppose that $T(n)$ is true.  The \textit{finiteness}, \textit{stability} and \textit{regularity} induction steps from the proof of Proposition 2.2.37 in \cite{Pie1} are obvious (cf. Definition 2.2.2 of \cite{Pie1}).

\textit{Transitivity step.}  Let $\mathcal{W}\in\Psi_{n+1}$ have property $\mathbf{P}(\mathcal{W})$. Suppose that $\mathcal{U}\in{\Psi}_n$ and $\{ \mathcal{V}(U): U\in\mathcal{U}\}\subseteq\Psi_{n}$ are such that $\mathcal{W}=\bigcup_{U\in\mathcal{U}}\mathcal{V}(U)$ and, for each $U\in\mathcal{U}$, we have $U=\bigcup\mathcal{V}(U)$. Consider any $U\in\mathcal{U}$.  If every member of $\mathcal{V}(U)$ is countable, then $U\in[X]^{\leq\omega}$ because $\mathbf{CC}$ holds and $\mathcal{V}(U)$ is essentially countable. 
Suppose  $\mathcal{V}(U)$ has an uncountable member. Since $\mathcal{V}(U)$ has property $\mathbf{P}(\mathcal{V}(U))$, it follows from the inductive assumption that there is a countable set $A(U)\subseteq X$ such that $\mathcal{V}(U)$ is essentially finite on $X\setminus A(U)$. Then $U\in [X]^{\leq\omega}\cup_1 \mathbf{Sm}(X)$ and $U$ is uncountable. 
The above implies that $\mathcal{U}$ has property $\mathbf{P}(\mathcal{U})$. By the assumption, there is a countable $A\subseteq X$ such that $\mathcal{U}$ is essentially finite on $X\setminus A$. Let $\mathcal{U}^{\ast}\subseteq \mathcal{U}$ be a finite family such that $\bigcup\mathcal{U}^{\ast}\setminus A=\bigcup\mathcal{U}\setminus A$. For each $U\in \mathcal{U}^{\ast}$, the set $U$ is countable or $\mathcal{V}(U)$ is essentially finite on $U\setminus A(U)$. This implies that there is a countable $A(\mathcal{W})$ such that $\mathcal{W}$ is essentially finite on $X\setminus A(\mathcal{W})$. 

\textit{Saturation step.}  Suppose that there exists $\mathcal{V}\in\Psi_n$ such that $\bigcup\mathcal{V}=\bigcup\mathcal{W}$ and, for each $V\in\mathcal{V}$, there exists a non-empty $\mathcal{W}(V)=\{ W\in \mathcal{W}:V \subseteq W\}$. 
Since $\mathcal{W}\subseteq [X]^{\leq\omega}\cup_1\mathbf{Sm}(X)$, we have $\mathcal{V}\subseteq [X]^{\leq\omega}\cup_1 \mathbf{Sm}(X)$. Since $\mathcal{W}$ has an uncountable member and $\mathcal{V}$ is essentially countable, also $\mathcal{V}$ has an uncountable member and has property $\mathbf{P}(\mathcal{V})$. By the inductive assumption, there exists a countable $A(\mathcal{V})$ such that $\mathcal{V}$ is essentially finite on $X\setminus A(\mathcal{V})$. Then $\mathcal{W}$ is essentially finite on $X\setminus A(\mathcal{V})$, too. 
\end{proof}

Suppose that  $X_{pt}$ is locally small. There exists $\mathcal{W}\in pt(\text{Cov}_X)$ such that $\mathcal{W}\subseteq\mathbf{Sm}(X)$ and $X=\bigcup\mathcal{W}$. Since $X$ is uncountable and $\mathcal{W}$ is essentially countable, at least one member of $\mathcal{W}$ is uncountable, so $\mathbf{P}(\mathcal{W})$ holds true. By Fact 3, there exists a countable $A(\mathcal{W})$ such that $\mathcal{W}$ is essentially finite on $X\setminus A(\mathcal{W})$. Then $X\setminus A(\mathcal{W})\in\mathbf{Sm}(X)$. This is impossible by Fact 1.
\end{example}
 
From Fact 3.7, taken together with Example 3.8, we deduce the following corollary:
  
\begin{corollary} In every model for $\mathbf{ZF+CC}$, there exists a gts $X$ such that $X\neq X_{pt}$, while both $X$ and $X_{pt}$ are simultaneously weakly locally small and not locally small.
\end{corollary}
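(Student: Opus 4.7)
The plan is to take, as the required gts, the space $X=(X,\text{Op}_X,\text{Cov}_X)$ constructed in Example 3.8. That example, working inside an arbitrary model of $\mathbf{ZF+CC}$, already establishes two of the crucial properties: $X$ itself is weakly locally small (since every singleton $\{p\}$ with $p\in X$ lies in $\text{Op}_X\cap\mathbf{Sm}(X)$, and the singletons obviously cover $X$), and $X_{pt}$ is not locally small. The latter is the genuinely hard input, and it is proved in the example by induction on the stages $\Psi_n$ of $\langle\Psi\rangle_X$, with $\mathbf{CC}$ used crucially in the transitivity step (passing from essential countability of a family of countable sets to countability of the union of that family). Note that $X$ itself is then automatically not locally small, either directly from the argument given in Example 3.8 or via Fact 3.7(i), since local smallness of $X$ would force local smallness of $X_{pt}$.

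It remains to verify that $X_{pt}$ is weakly locally small and that $X\neq X_{pt}$. The former is immediate from Fact 3.7(iii), which asserts that weak local smallness of a gts is equivalent to weak local smallness of its partial topologization. For the latter, observe that in the construction every singleton of $X$ already lies in $\text{Op}_X$, so the topology generated satisfies $\tau(\text{Op}_X)=\mathcal{P}(X)$; on the other hand $Y_0\notin\text{Op}_X$, since $Y_0\cap Y_0=Y_0$ is uncountable and hence does not belong to $\mathbf{FB}(Y_0)$. Thus $\text{Op}_X\subsetneq\tau(\text{Op}_X)$, and in particular the open-set families of $X$ and $X_{pt}$ differ, so $X\neq X_{pt}$.

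Collecting these observations yields a gts with all the properties demanded by the corollary. The main obstacle in the whole chain is entirely absorbed into Example 3.8 (specifically, the transfinite construction of $\langle\Psi\rangle_X$ and the role of $\mathbf{CC}$ in the transitivity step); at the level of the corollary itself there is essentially no new work beyond a clean appeal to Example 3.8 and Fact 3.7.
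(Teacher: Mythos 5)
Your proposal is correct and follows exactly the route the paper intends: the corollary is stated as a consequence of Fact 3.7 together with Example 3.8, and you supply precisely that deduction (weak local smallness of $X$ via open small singletons, transfer to $X_{pt}$ by Fact 3.7(iii), non-local-smallness of $X$ and $X_{pt}$ from the example, and $X\neq X_{pt}$ because $\tau(\text{Op}_X)=\mathcal{P}(X)$ while $Y_0\notin\text{Op}_X$). Nothing is missing; the only work beyond the citations is the routine verification of $X\neq X_{pt}$, which you carry out correctly.
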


 We do not have a satisfactory solution to the following open problem:
 \begin{problem} Is it true in $\mathbf{ZF}$ that if the partial topologization of a gts $X$ is locally small, then so is $X$?
 \end{problem}

\begin{proposition} Suppose that  $X=(X, \text{Op}_X, \text{Cov}_X)$ is a gts and $\mathcal{B}$ is a bornology in $X$. Then the following conditions are equivalent:
\begin{enumerate}
\item[(i)] the gts $X$ is (quasi)-(pseudo)metrizable with respect to $\mathcal{B}$;
\item[(ii)] the gts $(X, \text{EF}(\tau(\text{Op}_X), \mathcal{B}))$ is $\mathbf{Sm}$-(quasi)-(pseudo)metrizable and the collection   
 $\tau(\text{Op}_X)\cap\mathcal{B}$ is a base for $\mathcal{B}$.
\end{enumerate}
\end{proposition}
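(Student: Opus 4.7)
The plan is to reduce both conditions to the single statement that the bornological universe $(X_{top},\mathcal{B})$ is (quasi)-(pseudo)metrizable, using Fact 2.20 as the bridge that identifies $\mathcal{B}$ with the small bornology of the gts $Z:=(X,\text{EF}(\tau(\text{Op}_X),\mathcal{B}))$. A preliminary observation is that $\bigcup\text{EF}(\tau(\text{Op}_X),\mathcal{B})=\tau(\text{Op}_X)$, because every singleton family $\{V\}$ with $V\in\tau(\text{Op}_X)$ trivially belongs to $\text{EF}(\tau(\text{Op}_X),\mathcal{B})$. Consequently the topologization $Z_{top}$ coincides with $X_{top}$, so by Definition 3.1 the $\mathbf{Sm}$-(quasi)-(pseudo)metrizability of $Z$ amounts to (quasi)-(pseudo)metrizability of the bornological universe $(X_{top},\mathbf{Sm}(Z))$.

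For $(i)\Rightarrow(ii)$, I would take a (quasi)-(pseudo)metric $d$ on $X$ witnessing $(i)$, namely with $\tau(d)=\tau(\text{Op}_X)$ and $\mathcal{B}(d,X)=\mathcal{B}$. Given any $A\in\mathcal{B}$, choose $x\in X$ and $r\in(0;+\infty)$ with $A\subseteq B_d(x,r)$; the ball $B_d(x,r)$ is simultaneously $\tau(\text{Op}_X)$-open and $d$-bounded, hence belongs to $\tau(\text{Op}_X)\cap\mathcal{B}$. This confirms that $\tau(\text{Op}_X)\cap\mathcal{B}$ is a base for $\mathcal{B}$. Fact 2.20 then yields $\mathbf{Sm}(Z)=\mathcal{B}$, and combined with the preliminary observation this turns the hypothesis $(i)$ into the $\mathbf{Sm}$-(quasi)-(pseudo)met\-rizability of $Z$.

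Conversely, $(ii)\Rightarrow(i)$ is immediate from the same ingredients: the base hypothesis together with Fact 2.20 gives $\mathbf{Sm}(Z)=\mathcal{B}$, after which the assumed $\mathbf{Sm}$-(quasi)-(pseudo)metrizability of $Z$ is, via the preliminary observation, precisely (quasi)-(pseudo)metriz\-ability of $(X_{top},\mathcal{B})$, i.e.\ condition $(i)$. I do not anticipate a serious obstacle: the only step that calls for genuine argument is verifying in the forward direction that $d$-balls furnish members of $\tau(\text{Op}_X)\cap\mathcal{B}$, and the key conceptual move is the observation that passing from $X$ to $Z$ leaves the topologization unchanged, which is exactly what permits the direct appeal to Fact 2.20 in both directions.
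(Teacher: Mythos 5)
Your argument is correct and follows essentially the same route as the paper: first establish that $\tau(\text{Op}_X)\cap\mathcal{B}$ is a base for $\mathcal{B}$, then invoke the fact that this yields $\mathbf{Sm}((X, \text{EF}(\tau(\text{Op}_X), \mathcal{B})))=\mathcal{B}$ (that is Fact 2.16, not 2.20), and note that the topologizations of $X$ and of $(X,\text{EF}(\tau(\text{Op}_X),\mathcal{B}))$ coincide. The only divergence is that, in the direction $(i)\Rightarrow(ii)$, where the paper cites Theorem 4.7 of \cite{PW2} for the base condition, you verify it directly by exhibiting open bounded $d$-balls, which is sound (balls of a quasi-pseudometric are $\tau(d)$-open by the triangle inequality) and slightly more self-contained.
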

\begin{proof} Assume that $(i)$ holds.  By Theorem 4.7 of \cite{PW2}, the collection $\tau(\text{Op}_X)\cap\mathcal{B}$ is a base for $\mathcal{B}$. It follows from Fact 2.16 that $\mathcal{B}$ is equal to the family $\mathbf{Sm}((X, \text{EF}(\tau(\text{Op}_X), \mathcal{B})))$. In consequence, $(i)$ implies $(ii)$. On the other hand,  we can use Fact 2.16 with  both Definitions 1.6 and 2.15 to infer that $(i)$ follows from $(ii)$.
\end{proof}

\begin{definition} Suppose that $(X, \mathcal{B})$ is a generalized bornological universe where $X=(X, \text{Op}_X, \text{Cov}_X)$.  Let us say that $X$ is \emph{strongly $\mathcal{B}$-(quasi)-(pseudo)\-metrizable} if there exists a (quasi)-(pseudo)metric $d$ on $X$ such that $\mathcal{B}$ is the collection of all $d$-bounded sets and $\text{Op}_X=L_X[\{B_d(x, r): x\in X \wedge  r\in (0; +\infty)\}]$.
\end{definition}

In connection with strong $\mathbf{Sm}$-(quasi)-(pseudo)metrizability, let us pose the following open problem:
 \begin{problem}
 Find useful simultaneously necessary and sufficient conditions for a gts to be strongly $\mathbf{Sm}$-(quasi)-(pseudo)metrizable. 
 \end{problem}

\begin{definition} A \emph{(quasi)-(pseudo)metric gts} is an ordered pair $(X, d)$ 
where $X=(X,\text{Op}_X, \text{Cov}_X)$ is a gts and $d$ is 
a (quasi)-(pseudo)metric on $X$ such that $\tau(d)=\tau(\text{Op}_X)$. 
\end{definition}

\begin{definition} For a (quasi)-(pseudo)metric gts $(X, d)$ and a bornology $\mathcal{B}$ in $X$, we say that $(X, d)$ is \emph{uniformly (quasi)-(pseudo)metrizable with respect to $\mathcal{B}$} or, equivalently, that $\mathcal{B}$ is \emph{uniformly (quasi)-(pseudo)metrizable with respect to $d$} if there exists a (quasi)-(pseudo)metric $\rho$ on $X$ such that $d$ and $\rho$ are uniformly equivalent, 
and $\mathcal{B}=\mathcal{B}(\rho,X)$ (cf. Definition 6.3 of \cite{PW2} and Definition 2.3 of \cite{GM}).
\end{definition}

The following proposition follows from Theorem 6.5 of \cite{PW2}:

\begin{proposition}
Suppose that $(X, d)$ is a (quasi)-(pseudo)metric gts. Then a bornology $\mathcal{B}$ in $X$ is uniformly (quasi)-(pseudo)metrizable with respect to $d$ if and only if
$\mathcal{B}$ has a base $\{ B_n: n\in\omega\}$ such that, for some $\delta\in (0; +\infty)$ and for each $n\in\omega$, the inclusion $[B_n]^{\delta}_d\subseteq B_{n+1}$ holds.
\end{proposition}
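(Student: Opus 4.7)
The plan is to invoke Theorem 6.5 of \cite{PW2} after verifying that the hypotheses of the present proposition translate directly into its framework; I also sketch the two directions so that the plan is not purely a citation. Since $(X,d)$ is a (quasi)-(pseudo)metric gts, Definition 3.14 forces $\tau(d)=\tau(\text{Op}_X)$, so the data of interest reduce to the bornological universe $((X,\tau(d)),\mathcal{B})$ together with $d$. Neither uniform equivalence of two (quasi)-(pseudo)metrics nor the collection $\mathcal{B}(\rho,X)$ of $\rho$-bounded sets involves $\text{Cov}_X$, so Definition 3.15 above coincides with Definition 6.3 of \cite{PW2}, and the statement becomes exactly the equivalence asserted by Theorem 6.5 of \cite{PW2}.

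For the implication $(\Rightarrow)$, given $\rho$ uniformly equivalent to $d$ with $\mathcal{B}(\rho,X)=\mathcal{B}$, I would fix some $x_0\in X$ and set $B_n=B_\rho(x_0,n+1)$. This is a countable base for $\mathcal{B}$, since every $\rho$-bounded set lies inside some $\rho$-ball centred at $x_0$. Uniform equivalence yields $\delta\in(0;+\infty)$ such that $d(y,z)<\delta$ implies $\rho(y,z)<1$; the triangle inequality for $\rho$ then gives $[B_n]^{\delta}_d\subseteq B_{n+1}$. For the implication $(\Leftarrow)$, given a base $\{B_n:n\in\omega\}$ with $[B_n]^{\delta}_d\subseteq B_{n+1}$, I would construct $\rho$ from $d$ and a ``depth'' function $\varphi(x)=\min\{n\in\omega:x\in B_n\}$ (with the convention $\min\varnothing=+\infty$ outside $\bigcup_n B_n$), for example via a formula of the shape $\rho(x,y)=d(x,y)+\tfrac{1}{\delta}\max(0,\varphi(y)-\varphi(x))$, suitably modified so that $\rho$ is a (quasi)-(pseudo)metric with $\mathcal{B}(\rho,X)=\mathcal{B}$ and uniformly equivalent to $d$. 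The $\delta$-expansion hypothesis is precisely the Lipschitz estimate needed to bound $\varphi$-differences by $d$-distances and thus to control $\rho-d$.

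The main point of care---already carried out in the proof of Theorem 6.5 of \cite{PW2}---is verifying in the direction $(\Leftarrow)$ that the construction respects the asymmetry of a quasi-pseudometric (so that the conjugate direction behaves correctly) and that $\mathcal{B}(\rho,X)$ coincides with all of $\mathcal{B}$ rather than only containing the base $\{B_n\}$. This bookkeeping is where most of the genuine effort lies, but it presents no conceptual obstacle beyond what the cited theorem already handles.
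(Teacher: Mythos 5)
Your proposal takes exactly the same route as the paper: the paper proves this proposition by a bare citation of Theorem 6.5 of \cite{PW2}, and your reduction to that theorem (noting that uniform equivalence and $\mathcal{B}(\rho,X)$ depend only on $\tau(d)=\tau(\mathrm{Op}_X)$ and not on $\mathrm{Cov}_X$) is precisely the justification the paper leaves implicit. Your additional sketch of the two directions goes beyond what the paper records but is consistent with the cited result.
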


Other conditions that are equivalent to uniform  $\mathcal{B}$-(quasi)-(pseudo)metrizability of (quasi)-(pseudo)metric gtses can be deduced from the results of  \cite{GM} and from Section 6 of~\cite{PW2}.
 
\section{Applications to examples}

For $x,y\in\mathbb{R}$, let $d_n(x,y)=\mid x-y\mid$.  We denote by $\tau_{nat}$ the natural topology of $\mathbb{R}$ induced by the metric $d_n$. Let $\mathbf{CB}_{{nat}}(\mathbb{R})$ stand for the compact bornology of $(\mathbb{R}, \tau_{nat})$.  The topology $u=\{\emptyset, \mathbb{R}\}\cup\{ (-\infty; a): a\in\mathbb{R}\}$ is called the \emph{ upper topology} on $\mathbb{R}$, while  $l=\{\emptyset, \mathbb{R}\}\cup\{(a; +\infty): a\in\mathbb{R}\}$ is called the \emph{lower topology} on $\mathbb{R}$ (cf. \cite{FL}, \cite{Sal}). The following collections:
$$\mathbf{UB}(\mathbb{R})=\{ A\subseteq\mathbb{R}: \exists_{r\in\mathbb{R}} A\subseteq (-\infty; r)\},\:\: \mathbf{LB}(\mathbb{R})=\{ A\subseteq\mathbb{R}: \exists_{r\in\mathbb{R}} A\subseteq (r; +\infty)\}$$
are simple examples of bornologies in $\mathbb{R}$. Obviously,  
$$\mathbf{CB}_{nat}(\mathbb{R})=\mathbf{UB}(\mathbb{R})\cap \mathbf{LB}(\mathbb{R}).$$

\begin{example}
The topological space $(\mathbb{R}, u)$ is not quasi-metrizable (since it is not $T_1$) but it is quasi-pseudometrizable by $\rho_u(x,y)=\max(0,y-x)$.  
\begin{enumerate}
\item[(i)] For the gts $\mathbb{R}_{uu}=(\mathbb{R}, \text{EF}(u,\mathbf{UB}(\mathbb{R})))$, 
one has $\mathbf{ACB}(\mathbb{R}_{uu})\!=\mathbf{Sm}(\mathbb{R}_{uu})\!=\!\mathbf{UB}(\mathbb{R})$.
 This is why $\mathbb{R}_{uu}$ is  both $\mathbf{ACB}$- and $\mathbf{Sm}$-quasi-pseudometrizable by $\rho_u$. 
\item[(ii)] For the gts $\mathbb{R}_{ul}=(\mathbb{R}, \text{EF}(u,\mathbf{LB}(\mathbb{R})))$, we have $\mathbf{ACB}(\mathbb{R}_{ul})=\mathbf{Sm}(\mathbb{R}_{ul})=\mathcal{P}(\mathbb{R})$.
Hence $\mathbb{R}_{ul}$ is both $\mathbf{ACB}$- and $\mathbf{Sm}$-quasi-pseudometrizable by $\rho_{u,1}=\min\{1,\rho_u\}$.
\item[(iii)]  The gts $(\mathbb{R}, \text{EF}(u,\mathbf{CB}_{nat}(\mathbb{R})))$ is equal to $\mathbb{R}_{uu}$.  
\item[(iv)] The gts $(\mathbb{R}, \text{EF}(u,\mathcal{P}(\mathbb{R}))$ is equal to $\mathbb{R}_{ul}$.  
\item[(v)] The gts $\mathbb{R}_{uf}=(\mathbb{R}, \text{EF}(u, \mathbf{FB}(\mathbb{R})))$ is not $\mathbf{LB}(\mathbb{R})$-quasi-pseudometrizable because $\text{int}_uA=\emptyset$ for each $A\in\mathbf{LB}(\mathbb{R})$. Here $\mathbf{Sm}(\mathbb{R}_{uf})$ is the collection of all sets $A\in\mathbf{UB}(\mathbb{R})$ such that every non-empty subset of $A$ has its largest element. Similarly, $\mathbb{R}_{uf}$ is not $\mathbf{Sm}$-quasi-pseudometrizable.  Since $\mathbf{ACB}(\mathbb{R}_{uf})=\mathbf{CB}(\mathbb{R}_{uf})=\mathbf{UB}(\mathbb{R})$, the gts $\mathbb{R}_{uf}$ is $\mathbf{ACB}$-quasi-pseudometrizable by $\rho_u$. 
\item[(vi)] Each of $\mathbb{R}_{uu}, \mathbb{R}_{ul}, \mathbb{R}_{uf}$ is $\mathbf{CB}$-quasi-pseudometrizable by $\rho_u$.
\end{enumerate}
\end{example} 

Let us use the real lines described in Definition 1.2 of \cite{PW} as part of our illuminating examples for the notions of (uniform) $\mathcal{B}$-(quasi)-metrizability in the category $\mathbf{GTS}$.  

\begin{example}
For $x,y\in \mathbb{R}$, we put $ d_{n, 1}(x,y)=\min\{ d_n(x, y), 1\}$ and 
$$d_n^+(x,y)=d_n(\Phi(x),\Phi(y)) \text{ where } \Phi(x)= \left\{
\begin{array}{ll} e^x, & x<0,\\
1+x, & x\ge 0.\end{array} \right. $$
Moreover, we define  $d^+_{n,1}(x, y)=\min\{d^+_n(x, y), 1\}$. Let us observe that the metrics $d_n$ and $d^+_n$ are equivalent but not uniformly equivalent.
\begin{enumerate} 
\item[(i)]  Let $Cov$ be any generalized topology in $\mathbb{R}$ such that $((\mathbb{R}. \tau_{nat}, Cov), d_n^+)$ is a metric gts. 

We have $\mathcal{B}(d_n,\mathbb{R})=\mathbf{CB}_{{nat}}(\mathbb{R})$ and $\mathcal{B}(d^+_n,\mathbb{R})=\mathbf{UB}(\mathbb{R})$. Let us observe that, for a fixed $\delta\in (0; +\infty)$, there exists $n(\delta)\in\omega$ such that if $C_m=[-m; m]$ for $m\in\omega$ with $m>n(\delta)$,  then $(-\infty; m)\subseteq [C_m]^{\delta}_{d^+_n}$. This, together with Proposition 3.16, implies that $\mathcal{B}(d_n,\mathbb{R})$ is not uniformly quasi-metrizable with respect to $d^+_n$.  

\item[(ii)] For the usual topological real line $\mathbb{R}_{ut}$ (cf. Definition 1.2(i) of \cite{PW}), we have $\mathbf{FB}=\mathbf{Sm}\subset \mathbf{CB}=\mathbf{ACB}$ and $\text{int}_{{nat}}A=\emptyset$ for each $A\in\mathbf{Sm}(\mathbb{R}_{ut})$,  so the gts $\mathbb{R}_{ut}$ is not $\mathbf{Sm}$-quasi-metrizable and it is $\mathbf{ACB}$-metrizable by $d_n$. The metric gtses $(\mathbb{R}_{ut}, d_n)$ and $(\mathbb{R}_{ut}, d_{n,1})$ are $\mathbf{ACB}$-uniformly metrizable. It follows from (i) that the metric gtses $(\mathbb{R}_{ut}, d^+_n)$ is $(\mathbb{R}_{ut}, d^+_{n, 1})$ are not uniformly $\mathbf{ACB}$-quasi-metrizable. 

\item[(iii)] For the real lines $\mathbb{R}_{lst}$ and $\mathbb{R}_{lom}$ (cf. Definition 1.2(iv)-(v) of \cite{PW}), we have $pt(\mathbb{R}_{lom})=\mathbb{R}_{lst}$ and $\mathbf{Sm}=\mathbf{CB}=\mathbf{ACB}=\mathcal{B}(d_n,\mathbb{R})$. The metric gtses $(\mathbb{R}_{lst}, d_n)$ and $(\mathbb{R}_{lom}, d_n)$ are both uniformly $\mathbf{Sm}$-metrizable; however, none of the metric gtses $(\mathbb{R}_{lom}, d^+_n)$ and $(\mathbb{R}_{lst}, d^+_n)$ is uniformly $\mathbf{Sm}$-metrizable (see (i)).

\item[(iv)] For the real lines $\mathbb{R}_{l^+om}$ and $\mathbb{R}_{l^{+}st}$ (cf. Definition 1.2(vii)-(viii) of \cite{PW}), we have $pt(\mathbb{R}_{l^+om})=\mathbb{R}_{l^+st}$ and $\mathbf{CB}=\mathbf{CB}_{{nat}}(\mathbb{R})\subset \mathbf{Sm}=\mathbf{ACB}=\mathcal{B}(d^+_n,\mathbb{R})$. Now, it is obvious that both the metric gtses $(\mathbb{R}_{l^+om}, d^+_n)$ and $(\mathbb{R}_{l^+st}, d^+_n)$ are uniformly $\mathbf{ACB}$-metrizable by the metric $d^+_n$. The gtses $\mathbb{R}_{l^+om}$ and $\mathbb{R}_{l^{+}st}$ are $\mathbf{Sm}$-metrizable. The metric gtses $(\mathbb{R}_{l^+om}, d_n)$ and $(\mathbb{R}_{l^+st}, d_n)$ are uniformly $\mathbf{Sm}$-metrizable and uniformly $\mathbf{ACB}$-metrizable by 
$d_u(x,y)=d_{n,1}(x,y)+|\max(y,0)-\max(x,0)|$.

\item[(v)] Let us consider the gtses $\mathbb{R}_{om}, \mathbb{R}_{slom}, \mathbb{R}_{rom}$ and $\mathbb{R}_{st}$ (cf. Definition 1.2(ii), (iii), (vi) and (x) of \cite{PW}). We have $pt(\mathbb{R}_{om})= pt(\mathbb{R}_{slom})= pt(\mathbb{R}_{rom})=\mathbb{R}_{st}$ and $\mathbf{CB}\subset \mathbf{Sm}=\mathbf{ACB}=\mathcal{P}(\mathbb{R})$. The real lines $\mathbb{R}_{om}, \mathbb{R}_{slom}, \mathbb{R}_{rom}$ and $\mathbb{R}_{st}$  are $\mathbf{Sm}$-metrizable by the metric $d_{n, 1}$ and they are $\mathbf{CB}$-metrizable by the metric $d_{n}$.

\item[(vi)] The gts $\mathbb{R}_{om}$  (cf. Definition 1.2(ii) of \cite{PW}) is strongly $\mathbf{Sm}$-metrizable by $d_{n, 1}$.  
\end{enumerate}
\end{example}
 
A famous quasi-metrizable but non-metrizable Tychonoff space is the Sorgenfrey line, denoted here by $\mathbb{R}^S$. The topology $\tau_{S,r}$ of $\mathbb{R}^S$ is the the right half-open interval topology in $\mathbb{R}$. The space $\mathbb{R}^S$ is quasi-metrizable by the quasi-metric $\rho_{S}$ defined, for $x, y\in\mathbb{R}$,  as follows: 
 $$ \rho_{S}(x,y)=\left\{ \begin{array}{ll}
 y-x, & x\le y\\
 1, & x>y.\end{array}\right.$$
By Example 4.12 of \cite{PW2}, it is worthwhile to notice that the topology $\tau_{S,r}$ is also induced by the quasi-metric $\rho_{L}$ defined, for $x, y\in\mathbb{R}$, as follows:
 $$ \rho_{L}(x,y)=\left\{ \begin{array}{ll}
 \min\{y-x, 1\}, & x\le y\\
 1+x-y, & x>y.\end{array}\right.$$ 
 
Now, we may consider the following gtses, obtained from the Sorgenfrey line, that are modifications of the gtses defined in Definition 1.2 of \cite{PW}: 
 
\begin{definition} \label{proste Sorgenfreya}
We give names to the following Sorgenfrey real lines:
\begin{enumerate}
\item[(i)]  the \emph{usual topological Sorgenfrey real line} $\mathbb{R}^S_{ut}$
where $\text{Cov}=$ all families of members of $\tau_{S,r}$;

\item[(ii)] the \emph{o-minimal Sorgenfrey real line} $\mathbb{R}^S_{om}$
where $\text{Cov}=$ essentially finite families of finite unions of right half-open intervals;

\item[(iii)] the \emph{smallified topological} (or \emph{small partially topological}) \emph{Sorgenfrey real line} $\mathbb{R}^S_{st}$
where $\text{Cov}=$ essentially finite families of members of $\tau_{S,r}$;

\item[(iv)] the \emph{localized o-minimal Sorgenfrey real line} $\mathbb{R}^S_{lom}$
where $\text{Cov}=$ locally essentially finite families of locally finite unions of right half-open intervals; 

\item[(v)] the \emph{localized smallified topological Sorgenfrey real line} $\mathbb{R}^S_{lst}$
where $\text{Cov}=$ locally essentially finite families of members of $\tau_{S,r}$;

\item[(vi)] the \emph{smallified localized o-minimal Sorgenfrey real line} $\mathbb{R}^S_{slom}$ 
with $\text{Cov}=$ essentially finite families of locally finite unions of right half-open intervals;

\item[(vii)] the \emph{localized at $+\infty$} ($-\infty$, resp.) \emph{o-minimal Sorgenfrey real line} $\mathbb{R}^S_{l^+om}$ ($\mathbb{R}^S_{l^-om}$, resp.) where $\text{Cov}=$ locally essentially finite families of locally finite unions of right half-open intervals which, on the negative (positive, resp.) half-line,
are essentially finite and consist of only finite unions of right half-open intervals; 

\item[(viii)] the \emph{localized at $+\infty$} ($-\infty$, resp.) \emph{smallified topological Sorgenfrey real line} $\mathbb{R}^S_{l^+st}$ ($\mathbb{R}^S_{l^-st}$, resp.) where $\text{Cov}=$ locally essentially finite families of members of $\tau_{S,r}$ which are essentially finite on the negative (positive, resp.) half-line; 

\item [(ix)] the \emph{smallified localized at $+\infty$} ($-\infty$, resp.) \emph{o-minimal Sorgenfrey real line} $\mathbb{R}^S_{sl^+om}$ ($\mathbb{R}^S_{sl^-om}$, resp.) 
where $\text{Cov}=$ essentially finite families of locally finite unions of right half-open intervals which are only finite unions of right half-open intervals on the negative (positive, resp.) half-line;

\item[(x)] the \emph{rationalized o-minimal Sorgenfrey real line} $\mathbb{R}^S_{rom}$ where $\text{Cov}=$ essentially finite families of finite unions of right half-open intervals with endpoints being rational numbers or infinities. In this case, the topology $\tau(\bigcup\text{Cov})$ differs from $\tau_{S, r}$.
\end{enumerate}
\end{definition}

 \begin{example} (\textbf{Gtses from the Sorgenfrey line}.) 
 \begin{enumerate}
 \item[(i)] The gtses $\mathbb{R}^S_{lst}, \mathbb{R}^S_{lom}$   are both $\mathbf{ACB}$- and $\mathbf{Sm}$-quasi-metrizable by the quasi-metric $\rho_{0}$ defined as follows:
  $$ \rho_{0}(x,y)=\left\{ \begin{array}{ll}
 y-x, & x\le y\\
 1+x-y, & x>y.\end{array}\right.$$
\item[(ii)] The gtses $\mathbb{R}^S_{l^+st}, \mathbb{R}^S_{l^+om}$ are both $\mathbf{ACB}$- and $\mathbf{Sm}$-quasi-metrizable by $\rho_S$, while the gtses $\mathbb{R}^S_{l^-om}, \mathbb{R}^S_{l^-st} $ are both $\mathbf{ACB}$- and $\mathbf{Sm}$-quasi-metrizable by $\rho_L$.
 \item[(iii)] The gtses   $\mathbb{R}^S_{om}, \mathbb{R}^S_{slom}, \mathbb{R}^S_{st}, \mathbb{R}^S_{sl^+om}$  are $\mathbf{ACB}$- and $\mathbf{Sm}$-quasi-metrizable by $\rho_{S, 1}$.
 \item[(iv)] It follows from Theorem 1.9 that the gts  $\mathbb{R}^S_{ut}$ is neither $\mathbf{Sm}$-quasi-metrizable because $\tau_{S, r}\cap\mathbf{FB}(\mathbb{R})$ is not a base for $\mathbf{FB}(\mathbb{R})$, nor $\mathbf{ACB}$-quasi-metrizable (see below). 
 \end{enumerate}
 \end{example}
 
\begin{example}
Since each relatively compact set in the Sorgenfrey topology is countable, 
none of the Sorgenfrey lines defined above is $\mathbf{CB}$-metrizable.
\end{example}
 
\begin{example}(\textbf{Quasi-metric gtses from the Sorgenfrey line}.) We use the same notation as in Example 4.4.
\begin{enumerate}
\item[(i)] The quasi-metric gtses  $(\mathbb{R}^S_{lst}, \rho_{0})$, $(\mathbb{R}^S_{lom}, \rho_{0})$ are uniformly $\mathbf{ACB}=\mathbf{Sm}$-quasi-metrizable by $\rho_{0}$.
\item[(ii)] The quasi-metric gtses $(\mathbb{R}^S_{l^+st}, \rho_{0})$, $(\mathbb{R}^S_{l^+om}, \rho_{0})$ are uniformly $\mathbf{ACB}=\mathbf{Sm}$-quasi-metrizable by $\rho_{S}$, while the quasi-metric gtses  $(\mathbb{R}^S_{l^-st}, \rho_{0})$, $(\mathbb{R}^S_{l^-om}, \rho_{0})$ are uniformly $\mathbf{ACB}=\mathbf{Sm}$-quasi-metrizable by $\rho_{L}$,
\item[(iii)] The quasi-metric gtses $(\mathbb{R}^S_{om}, \rho_0)$, $(\mathbb{R}^S_{slom}, \rho_0)$,  $(\mathbb{R}^S_{st}, \rho_0)$, $(\mathbb{R}^S_{sl^+om}, \rho_0)$ are uniformly $\mathbf{ACB}=\mathbf{Sm}$-quasi-metrizable by $\min\{ \rho_0, 1\}$.
\item[(iv)] None of the quasi-metric gtses $(\mathbb{R}^S_{lom}, \rho^-_S)$,  $(\mathbb{R}^S_{lst}, \rho^-_S)$ is uniformly $\mathbf{ACB}$-
or $\mathbf{Sm}$-quasi-metrizable, where
$$ \rho^-_S (x,y)= \rho_S(\Phi(-y),\Phi(-x)) \qquad (\mbox{with }\Phi \mbox{ from Example 4.2})  .$$
\end{enumerate}
\end{example}

\begin{example} Let us put $J=[0; 1]\times\{0\}$ and $J_{q}=\{q\}\times [0; 1]$. For $S=[0; 1]\cap\mathbb{Q}$, let $X=J\cup\bigcup_{q\in S}J_q$. We consider the collection $\mathcal{B}$ of all sets $A\subseteq X$ that have the property: there exists a finite $S(A)\subseteq S$ such that $A\subseteq J\cup\bigcup_{q\in S(A)}J_q$.
\begin{enumerate}
\item[(i)] Let $e$ be the Euclidean metric on $X$. Then, for each $A\in\mathcal{B}$, we have $\text{int}_{\tau(e)}A=\emptyset$, so, for every topology $\tau_2$ in $X$,  the bornology $\mathcal{B}$ is not $(\tau(e), \tau_2)$-proper. In consequence, by Theorem 1.9, the gts $(X,\text{EF}(\tau(e), \mathcal{B}))$ is not $\mathbf{Sm}$-quasi-metrizable.
\item[(ii)] We define another metric $\rho$ on $X$ as follows. For $x, y\in [0; 1]$ and $q, q'\in S$ with $q\neq q'$, we put $\rho((x,0), (y,0))=\mid x-y\mid, \rho((q, x),(q, y))=\mid x-y\mid $ and $\rho((q, x), (q', y))=x+\mid q-q'\mid +y$. Then, for each $q\in S$ and for any $a, b\in [0; 1]$ with $a<b$, we have $\{ q\}\times (a; b)=\text{int}_{\tau(\rho)}[\{q\}\times (a; b)]\in\mathcal{B}$. Since there does not exist $A\in\mathcal{B}$ such that $J\subseteq\text{int}_{\tau(\rho)}A$, we deduce from Theorem 1.9  that the gts  $(X,\text{EF}(\tau(\rho), \mathcal{B}))$ is not $\mathbf{Sm}$-quasi-metrizable. The space $(X, \tau(\rho))$ can be called the \emph{comb with its hand $J$ and teeth $J_q$}, $q\in \mathbb{Q}$ (compare with Example IV.4.7 of \cite{Kn}).
\end{enumerate}
\end{example}
 
In what follows, if $X$ and $Y$ are gtses, the symbol $X\times_{\mathbf{GTS}} Y$ denotes their $\mathbf{GTS}$-product (cf. Definition 4.6 of \cite{PW}). 

\begin{example} 
The space $\mathbb{R}_{om}\times_{\mathbf{GTS}} \mathbb{R}_{lom}$ is $\mathbf{Sm}$-metrizable by 
$$d_{om,lom}((x_1,x_2),(y_1,y_2))=\big| \frac{x_1}{\sqrt{1+x_1^2}} - \frac{x_2}{\sqrt{1+x_2^2}}\big| + |x_2-y_2|,$$
 but $ \mathbb{R}_{ut}\times_{\mathbf{GTS}} \mathbb{R}_{lom}$ is not  $\mathbf{Sm}$-quasi-metrizable. Similarly, $\mathbb{R}_{st}\times_{\mathbf{GTS}} \mathbb{R}_{l^+om}$ is $\mathbf{Sm}$-metrizable. (See    Fact 4.10  in \cite{PW}.)
\end{example}

\section{New topological categories}

The table of categories in \cite{AHS}, among other categories, says about the category $\mathbf{Top}$ of topological spaces, the category $\mathbf{BiTop}$ of bitopological spaces and about the category $\mathbf{Bor}$ of bornological sets. The categories $\mathbf{GTS}$, $\mathbf{GTS}_{pt}$, $\mathbf{SS}$ of small generalized topological spaces and $\mathbf{LSS}$ of locally small generalized topological spaces, as well as  $\mathbf{SS}_{pt}$ and $\mathbf{LSS}_{pt}$, were introduced in \cite{Pie1} and \cite{Pie2}. 

In the light of  Proposition 2.14  and Fact 3.7(i), we can state the following:

\begin{f} The functor $pt$ of partial topologization preserves smallness and local smallness. More precisely:
\begin{enumerate}
\item[(i)] $pt$ restricted to $\mathbf{SS}$ maps $\mathbf{SS}$ onto $\mathbf{SS}_{pt}$;
\item[(ii)] $pt$ restricted to $\mathbf{LSS}$ maps $\mathbf{LSS}$ onto $\mathbf{LSS}_{pt}$. 
\end{enumerate}
\end{f}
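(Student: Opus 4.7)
The plan is to derive both parts as immediate consequences of the two cited results, together with the idempotency of the partial topologization functor: $pt(Y)=Y$ whenever $Y$ is already partially topological. Each item splits into a preservation half (the image lies in the claimed subcategory) and a surjectivity half (every object of the claimed subcategory lies in the image). On morphisms there is nothing to verify because $pt$ acts as the identity.

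For (i), preservation proceeds by invoking Proposition 2.14, which yields $\mathbf{Sm}(X)=\mathbf{Sm}(X_{pt})$ for every gts $X$. Since a gts is small precisely when its underlying set lies in its own small bornology, this equality gives that $X$ is small if and only if $X_{pt}$ is small. Combined with the fact that $X_{pt}$ is partially topological by construction, we obtain $pt(X)\in\mathbf{SS}_{pt}$ whenever $X\in\mathbf{SS}$. For surjectivity, given any $Y\in\mathbf{SS}_{pt}$, note that $Y$ is a small gts (so $Y\in\mathbf{SS}$) and satisfies $pt(Y)=Y_{pt}=Y$, so $Y$ lies in the image of $pt|_{\mathbf{SS}}$.

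For (ii), preservation is even more direct: Fact 3.7(i) is precisely the assertion that $X_{pt}$ is locally small whenever $X$ is locally small. Together with the partial topologicality of $X_{pt}$, this yields $pt(\mathbf{LSS})\subseteq\mathbf{LSS}_{pt}$. Surjectivity follows by the same argument as in (i), applied to an arbitrary $Y\in\mathbf{LSS}_{pt}$: such a $Y$ is automatically in $\mathbf{LSS}$ and satisfies $pt(Y)=Y$.

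I do not foresee any genuine obstacle here; the statement is essentially a categorical repackaging of Proposition 2.14 and Fact 3.7(i). The only non-trivial ingredient beyond those two facts is the idempotency of the partial topologization construction, i.e.\ $(X_{pt})_{pt}=X_{pt}$, which is built into what it means for a gts to be partially topological and guarantees that $pt$ actually lands inside the full subcategory of partially topological spaces.
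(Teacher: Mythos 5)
Your argument is correct and coincides with the paper's own justification, which consists precisely of citing Proposition 2.14 (for smallness, via $\mathbf{Sm}(X)=\mathbf{Sm}(X_{pt})$) and Fact 3.7(i) (for local smallness); the paper leaves the surjectivity halves implicit, and your observation that they follow from the idempotency $pt(Y)=Y$ for partially topological $Y$ is exactly the intended reading.
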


The categories $\mathbf{Top}, \mathbf{BiTop},  \mathbf{GTS}, \mathbf{GTS}_{pt}, \mathbf{SS}, \mathbf{SS}_{pt}$ and $\mathbf{Bor}$ are all topological constructs (cf. \cite{AHS}, \cite{H-N}, \cite{Pie1},\cite{Pie2},  \cite{PW} and \cite{Sal}). Since $\mathbf{Top}$ and $\mathbf{Bor}$ are topological constructs, it is obvious that the category $\mathbf{UBor}$ of bornological universes (cf. Remark 2.2.70 of \cite{Pie1}) is a topological construct, too.  Let us define several more categories and answer the question whether they are topological constructs.

\begin{definition}[cf. 1.2.1 in \cite{H-N}] Let $\mathcal{B}_X$ be a boundedness  in a set $X$ and let $\mathcal{B}_Y$ be a boundedness  in a set $Y$. We say that a mapping $f:X\to Y$ is $(\mathcal{B}_X, \mathcal{B}_Y)$\emph{-bounded} (in abbreviation: \emph{bounded}) if, for each $A\in\mathcal{B}_X$, we have $f(A)\in\mathcal{B}_Y$. 
\end{definition}

\begin{definition} Suppose that $((X,\tau_1^X, \tau_2^X), \mathcal{B}_X)$ and $((Y,\tau_1^Y, \tau_2^Y), \mathcal{B}_Y)$  are bor\-no\-logical biuniverses. We say that a mapping $f:X\to Y$ is a \emph{bounded bicontinuous mapping} from $((X,\tau_1^X, \tau_2^X), \mathcal{B}_X)$ to $((Y,\tau_1^Y, \tau_2^Y), \mathcal{B}_Y)$ if $f$ is bicontinuous  with respect to $(\tau_1^X, \tau_2^X, \tau_1^Y, \tau_2^Y)$  and $f$ is $(\mathcal{B}_X, \mathcal{B}_Y)$-bounded. 
\end{definition}

\begin{definition} Suppose that $((X, \text{Cov}_X), \mathcal{B}_X)$ and $((Y, \text{Cov}_Y), \mathcal{B}_Y)$ are generalized bornological universes. We say that a mapping $f: X\to Y$ is a \emph{bounded strictly continuous mapping} from $((X, \text{Cov}_X), \mathcal{B}_X)$ to $((Y, \text{Cov}_Y), \mathcal{B}_Y)$ if $f$ is both $(\mathcal{B}_X, \mathcal{B}_Y)$-bounded and $(\text{Cov}_X, \text{Cov}_Y)$-strictly continuous. 
\end{definition}

\begin{definition} A generalized bornological universe $((X, \text{Cov}_X), \mathcal{B})$ is called: 
\begin{enumerate}
\item[(i)] \emph{partially topological} if the gts $(X, \text{Cov}_X)$ is partially topological;
\item[(ii)] \emph{small} if the gts $(X, \text{Cov}_X)$ is small.
\end{enumerate}
\end{definition}

\begin{definition} We define the following categories:
\begin{enumerate}
\item[(i)] $\mathbf{BiUBor}$ where objects are bornological biuniverses and morphisms are bounded bicontinuous mappings;
\item[(ii)] $\mathbf{GeUBor}$ where objects are generalized bornological universes and morphisms are bounded strictly continuous mappings;
\item[(iii)] $\mathbf{Ge_{pt}UBor}$ where objects are partially topological generalized bor\-no\-lo\-gi\-cal universes and morphisms are bounded strictly continuous mappings;
\item[(iv)] $\mathbf{SmUBor}$ where objects are small generalized  bornological universes and morphisms are bounded strictly continuous mappings;
\item[(v)] $\mathbf{Sm_{pt}UBor}$ where objects are partially topological small generalized bor\-no\-logical universes and morphisms are bounded strictly continuous mappings.
\end{enumerate}
\end{definition} 

 \begin{proposition}
 All categories $\mathbf{BiUBor}$, $\mathbf{GeUBor}$, $\mathbf{Ge_{pt}UBor}$, $\mathbf{SmUBor}$ and
 $\mathbf{Sm_{pt}UBor}$  are topological constructs.
 \end{proposition}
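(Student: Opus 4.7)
The plan is to view each of the five categories as a ``pullback over $\mathbf{Set}$'' of two topological constructs: one providing the topological-style structure (one of $\mathbf{BiTop}$, $\mathbf{GTS}$, $\mathbf{GTS}_{pt}$, $\mathbf{SS}$, $\mathbf{SS}_{pt}$) and the other, namely $\mathbf{Bor}$, providing the bornology. Each of these component categories was already recalled just above Proposition 5.7 to be a topological construct; moreover, in a combined structure $(C,\mathcal{B})$ from Definition 5.6, the axioms governing $C$ and those governing $\mathcal{B}$ are independent of each other. This should allow a structured source in $\mathbf{Set}$ to be lifted componentwise.

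I would treat $\mathbf{GeUBor}$ first, the other cases being analogous. Let $U\colon \mathbf{GeUBor}\to \mathbf{Set}$ be the obvious forgetful functor and let $(f_i\colon X \to ((Y_i, \text{Cov}_{Y_i}), \mathcal{B}_i))_{i \in I}$ be a $U$-structured source. Using topologicity of $\mathbf{GTS}$, take $\text{Cov}_X$ to be the initial generalized topology on $X$ with respect to $(f_i, \text{Cov}_{Y_i})_{i \in I}$; using topologicity of $\mathbf{Bor}$, put $\mathcal{B}_X = \{A \subseteq X : f_i(A) \in \mathcal{B}_i \text{ for every } i \in I\}$. A routine verification shows that $\mathcal{B}_X$ contains all singletons of $X$ and is closed under finite unions and under passing to subsets, so it is a bornology in $X$ and $((X, \text{Cov}_X), \mathcal{B}_X)$ is an object of $\mathbf{GeUBor}$.

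To verify this is the (necessarily unique) initial lift, take any $((Z, \text{Cov}_Z), \mathcal{B}_Z)$ in $\mathbf{GeUBor}$ and any map $g\colon Z \to X$. Then $g$ is a morphism into $((X, \text{Cov}_X), \mathcal{B}_X)$ iff it is both strictly continuous and $(\mathcal{B}_Z,\mathcal{B}_X)$-bounded, which by the initial-lift properties of $\text{Cov}_X$ in $\mathbf{GTS}$ and $\mathcal{B}_X$ in $\mathbf{Bor}$ is equivalent to each $f_i \circ g$ being simultaneously strictly continuous and bounded towards $((Y_i, \text{Cov}_{Y_i}), \mathcal{B}_i)$, i.e.\ to each $f_i \circ g$ being a morphism in $\mathbf{GeUBor}$. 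Fibre-smallness is immediate, since a generalized topology and a bornology on $X$ are subsets of $\mathcal{P}^{2}(X)$ and $\mathcal{P}(X)$ respectively. For $\mathbf{Ge_{pt}UBor}$, $\mathbf{SmUBor}$ and $\mathbf{Sm_{pt}UBor}$ I would perform the same construction but take the initial lift of the gts-component inside $\mathbf{GTS}_{pt}$, $\mathbf{SS}$ or $\mathbf{SS}_{pt}$ respectively, which is possible since each of these is itself a topological construct. For $\mathbf{BiUBor}$ one replaces the generalized-topological component by a pair of topologies and invokes topologicity of $\mathbf{BiTop}$.

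The only potential obstacle is ensuring that the lifted gts-structure actually lies in the relevant full subcategory ($\mathbf{GTS}_{pt}$, $\mathbf{SS}$ or $\mathbf{SS}_{pt}$). This is precisely what is handled by the known fact that these subcategories are themselves topological constructs: their own initial-lift constructions automatically produce an object of the required type (partially topological, small, or both). Hence the only genuinely new work is the coupling of the gts-side lift with the bornology-side lift and the verification of the universal property for the combined morphisms, which is routine as sketched above.
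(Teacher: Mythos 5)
Your proposal is correct and follows essentially the same route as the paper: the paper also lifts the source componentwise, taking the $\mathbf{GTS}$-initial generalized topology (followed by $pt$ in the partially topological cases, which is exactly how the $\mathbf{GTS}_{pt}$-initial lift is obtained) together with the bornology $\bigcap_{i\in I}\{A\subseteq X: f_i(A)\in\mathcal{B}_i\}$, and checks the universal property by observing that the morphism condition splits into the strictly-continuous part and the bounded part. Your phrasing of this as a pullback over $\mathbf{Set}$ of two topological constructs is just a more systematic packaging of the same argument.
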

 \begin{proof} To check that, for instance,  $\mathbf{Ge_{pt}UBor}$ is a topological construct, we mimic the proof to Theorem 4.4 of \cite{PW}. Namely, let us consider a source $F=\{ f_i: i\in I\}$ of mappings $f_i: X\to Y_i$ indexed by a class $I$ where every $Y_i$ is a partially topological generalized bornological universe and $Y_i=((X_i, \text{Cov}_i), \mathcal{B}_i)$. Let $\text{Cov}_X$ be the $\mathbf{GTS}$-initial generalized topology for $F$ in $X$ (cf. Definition 4.3 of \cite{PW}) and let 
 $\mathcal{B}_X=\bigcap_{i\in I}\{ A\subseteq X: f_i(A)\in\mathcal{B}_i\}$.  
 For $X=((X, \text{Cov}_X), \mathcal{B}_X)$, let $X_{pt}=(pt((X, \text{Cov}_X)), \mathcal{B}_X)$. The canonical morphism  $id: X_{pt}\to X$ is such that all mappings $f_i\circ id$ are morphisms in $\mathbf{Ge_{pt}UBor}$. For any object $Z$ of $\mathbf{Ge_{pt}UBor}$ and a mapping $h:Z\to X_{pt}$, we can observe that if all $f_i\circ id\circ h$ with $i\in I$ are morphisms, then $id\circ h$ is a morphism of $\mathbf{GTS}$, so $pt(h)=h$ is a morphism of $\mathbf{GTS}_{pt}$. If all $f_i\circ id\circ h$ are bounded, then $pt(h)=h$ is bounded, too. That $\mathbf{BiUBor}, \mathbf{GeUBor}, \mathbf{SmUBor}$ and $\mathbf{Sm_{pt}UBor}$ are topological can be proved by using similar arguments. 
 \end{proof} 
 
Some other topological constructs, relevant to bornologies or  quasi-pseudo\-metrics, were considered in \cite{CL} and \cite{Vr}.

\textbf{Acknowledgement.} We thank Prof. W. Paw\l ucki for turning our attention to a mistake in an earlier version of Example 3.8, which has made it possible for us to correct the example.

\end{document}